\newtheorem{thm}{Theorem}
\newtheorem{lem}[thm]{Lemma}
\newtheorem{prop}[thm]{Proposition}
\newtheorem{cor}[thm]{Corollary}
\theoremstyle{remark}
\newtheorem{rem}[thm]{Remark}
\let\BFseries\bfseries\def\bfseries{\BFseries\mathversion{bold}}
\newcommand{\N}{\mathbb{N}}
\newcommand{\Z}{\mathbb{Z}}
\newcommand{\Q}{\mathbb{Q}}
\newcommand{\R}{\mathbb{R}}
\newcommand{\C}{\mathbb{C}}
\newcommand{\E}{\mathbb{E}}
\newcommand{\p}{\mathbb{P}}
\newcommand{\CC}{\mathcal{C}}
\newcommand{\NN}{\mathcal{N}}
\newcommand{\BB}{\mathcal{B}}
\newcommand{\LL}{\mathcal{L}}
\newcommand{\1}{\mathbbm{1}}
\newcommand{\eps}{\varepsilon}
\DeclareMathOperator{\e}{e}
\newcommand{\D}{\mathrm{d}}
\newcommand{\op}{\operatorname}
\newcommand{\lne}{<}
\newcommand{\gne}{>}
\newcommand{\abs}{|}
\begin{document}
\title{Brownian motion conditioned to spend limited time outside a bounded interval -- an extreme example of entropic repulsion}
\author{Frank Aurzada\footnote{Technical University of Darmstadt}\and Martin Kolb\footnote{Paderborn University} \and Dominic T. Schickentanz\footnotemark[1]$^{\ ,}$\footnotemark[2]}
\date{\today}
\maketitle
\begin{abstract}
We show that a Brownian motion on $\R_{\geq 0}$ which is allowed to spend a total of $s \gne 0$ time units outside a bounded interval does not leave the interval at all. This can be seen as an extreme example of entropic repulsion. Moreover, we explicitly determine the exact asymptotic behavior of the probability that a Brownian motion on $[0,T]$ spends limited time outside a bounded interval, as $T\to\infty$.
\end{abstract}
\section{Introduction and main results}
This paper is concerned with stochastic processes under constraints. Starting with Doob in~\cite{Doo57}, where Brownian motion is conditioned not to enter the negative half-line, many other constraints have been studied. Here, we continue a line of research which was initiated by Benjamini and Berestycki in~\cite{BB10} and~\cite{BB11}. In both papers, a Brownian motion is forced to fulfil a condition that is atypical for the process and it turns out that, due to entropic repulsion effects, the conditioned process happens to fulfill an even more stringent condition than required.
\\
More precisely, in~\cite{BB11}, the local time of Brownian motion at the origin is required to stay below a given deterministic non-decreasing function. If, in particular, this function is constant, the conditioned process does not exhaust the full allowed local time. Extensions are given by~\cite{KS16} and~\cite{Bar20}. Further, in~\cite{BB11}, the case where the time a standard Brownian motion spends in the negative half-line has to stay bounded by a constant is also treated. Again, the conditioned process does not exhaust the full allowed occupation time.
This result has been extended by~\cite{AS22}.
In~\cite{BB10}, the authors consider the case where the Brownian local times at all places have to stay bounded by a common constant. In all these situations, effects of entropic repulsion are clearly visible.
\\
In the present paper, we will encounter a very rare extreme example of entropic repulsion: If Brownian motion is forced to spend \textit{only limited time} outside a bounded interval, then the resulting process does not spend \textit{any time at all} outside the interval. In other words, the resulting process does not make any use of the possibility to leave the interval, which is somewhat surprising. Besides a rather involved conditioning considered in~\cite{Pro15}, it is, in fact, the first and only time so far such an extreme behavior has been discovered.
\\
In order to explain our result in precise terms, let us first introduce some basic notation.
Fix~$s \gne 0$, let~$B=(B_t)_{t \ge 0}$ be a Brownian motion starting in~$y \in (-1,1)$ and let
$$
\Gamma_T:= \int_0^T \1_{\{\abs B_t\abs \ge 1\}} \D t, \quad T \ge 0,
$$
be the time~$B$ spends outside the interval~$(-1,1)$ until time~$T$. By the scaling property and translation invariance of Brownian motion, one can immediately transfer our results to the case of an arbitrary bounded interval.

Our main result can be stated as follows:
\begin{thm}
\label{thm:main}
As $T \to \infty$, the probability measures $\p_y(B \in \cdot \,\, \abs\, \Gamma_T \le s)$ and $\p_y(B \in \cdot \,\, \abs\, \Gamma_T = 0)$ converge weakly to the same limit on the Wiener space~$\mathcal{C}([0,\infty),\mathbb{R})$. The limiting process~$(Y_t)_{t \ge 0}$ satisfies the SDE
\begin{equation}
\label{eqn:sde}
Y_0=y,\quad \D Y_t = \D W_t - \frac{\pi}{2} \tan\left(\frac{\pi Y_t}{2}\right) \D t, \quad t \ge 0,
\end{equation}
where~$(W_t)_{t \ge 0}$ is a standard Brownian motion.
\end{thm}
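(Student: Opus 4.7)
The drift $-\tfrac{\pi}{2}\tan(\pi y/2)$ in \eqref{eqn:sde} equals $h'(y)/h(y)$ for $h(y) = \cos(\pi y/2)$, the principal Dirichlet eigenfunction of $-\tfrac{1}{2}\partial_y^2$ on $(-1,1)$ with eigenvalue $\lambda := \pi^2/8$. Hence $(Y_t)$ is the Doob $h$-transform of Brownian motion killed at the first exit time $\tau := \inf\{t \ge 0 : \abs B_t \abs \ge 1\}$, i.e.\ Brownian motion conditioned never to exit $(-1,1)$. The plan is to reduce both conditionings in the theorem to this canonical one via a sharp tail asymptotic for $\Gamma_T$ and the Markov property.

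The heart of the argument is the estimate
\begin{equation*}
\p_y(\Gamma_T \le s) \sim C(s)\, h(y)\, e^{-\lambda T}, \qquad T \to \infty,
\end{equation*}
uniformly in $y \in [-1,1]$, with $s \mapsto C(s) \in (0,\infty)$ continuous. For $s = 0$ this is the classical Dirichlet-heat-kernel asymptotic $\p_y(\tau > T) \sim C(0) h(y) e^{-\lambda T}$. For $s > 0$ I would prove it either via a Feynman--Kac representation by analysing the Schr\"odinger operator $-\tfrac{1}{2}\partial_y^2 + \mu\, \1_{\{\abs y \abs \ge 1\}}$ and letting $\mu \to \infty$ (so that the principal eigenvalue converges to $\lambda$ and the eigenfunction to $h$ on $(-1,1)$), or via a first-exit / first-return decomposition leading to an integral equation amenable to a Tauberian/renewal argument. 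The intuition is that a fixed budget $s$ of time outside cannot alter the exponential rate, which is governed by the long stretches inside $(-1,1)$; the eigenfunction prefactor $h(y)$ persists because the bulk of the mass concentrates on paths spending almost all of $[0,T]$ inside.

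Given this asymptotic, the Markov property at a fixed $t \ge 0$ yields, for any bounded continuous functional $F$ depending only on paths up to time $t$,
\begin{equation*}
\E_y[F \mid \Gamma_T \le s] = \frac{\E_y\bigl[F \cdot \1_{\{\Gamma_t \le s\}}\, \p_{B_t}(\Gamma_{T-t} \le s - \Gamma_t)\bigr]}{\p_y(\Gamma_T \le s)}.
\end{equation*}
On $\{\tau > t\}$, where $\Gamma_t = 0$ and $B_t \in (-1,1)$, the inner probability is asymptotic to $C(s) h(B_t) e^{-\lambda(T-t)}$, and dividing by the denominator gives the limiting expression $e^{\lambda t}\, \E_y[F\, \1_{\{\tau > t\}}\, h(B_t)]/h(y)$, which is precisely the finite-dimensional law of $Y$. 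Choosing $F \equiv 1$ and invoking the $\lambda$-harmonicity $\E_y[\1_{\{\tau > t\}} h(B_t)] = h(y) e^{-\lambda t}$ shows this already exhausts unit mass, forcing $\p_y(\tau \le t \mid \Gamma_T \le s) \to 0$ -- the entropic-repulsion statement -- so that the contribution from $\{\tau \le t\}$ is asymptotically negligible. The identical reasoning applied to the conditioning $\{\Gamma_T = 0\} = \{\tau > T\}$ delivers the same limit, establishing that both conditional laws share the same finite-dimensional distributions. Tightness on $\mathcal{C}([0,\infty),\R)$ then follows by comparing the conditional laws with the law of $Y$, which is a non-exploding strong solution of \eqref{eqn:sde} with drift locally bounded on $(-1,1)$.

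The main obstacle will be the sharp asymptotic: pinning down the exponential rate $\lambda$ and the correct prefactor $h(y)$ uniformly in $y$, with sufficient control near the endpoints $\pm 1$ to legitimately pass to the limit in the Markov decomposition. Once that is in place, the $h$-transform identification and the entropic-repulsion conclusion follow as indicated.
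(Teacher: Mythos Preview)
Your overall architecture---Markov decomposition at a fixed time, identification of the limit as the Doob $h$-transform, and the entropic-repulsion conclusion via exhaustion of mass---matches the paper's Section~3. But the ``heart of the argument'' as you state it is wrong, and this is not a minor slip.

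You claim $\p_y(\Gamma_T \le s) \sim C(s)\,h(y)\,e^{-\lambda T}$. This is false for $s>0$: the paper's Proposition~1.2 shows
\[
\p_y(\Gamma_T \le s) \;\sim\; \mathrm{const}\cdot h(y)\, T^{-1/3}\exp\!\Bigl(-\lambda T + c_1\,s^{1/3}T^{2/3} - c_2\,s^{2/3}T^{1/3} + c_3\,s\Bigr),
\]
with explicit constants $c_1,c_2,c_3>0$. In particular $\p_y(\Gamma_T=0)/\p_y(\Gamma_T\le s)=\exp\bigl(-c_1 s^{1/3}T^{2/3}(1+o(1))\bigr)\to 0$, so the two events have genuinely different tails even on the subexponential scale. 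Your intuition that ``a fixed budget $s$ cannot alter the exponential rate'' is right, but it \emph{does} alter everything below that rate. Neither the Schr\"odinger-operator heuristic nor a naive renewal argument will produce your simple form; indeed the paper needs a careful Laplace-transform analysis plus Ingham's Tauberian theorem, and even then the required monotonicity fails for deterministic starting points and has to be recovered via a quasi-stationary-distribution trick.

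What rescues your strategy is that the quantity actually needed is only the \emph{ratio}
\[
\frac{\p_{x}(\Gamma_{T-t}\le s)}{\p_{y}(\Gamma_T\le s)} \;\longrightarrow\; \frac{h(x)}{h(y)}\,e^{\lambda t},\qquad x,y\in(-1,1),
\]
which does hold because the $T^{2/3}$ and $T^{1/3}$ terms satisfy $(T-t)^{q}-T^{q}\to 0$ for $q<1$. So your Markov-decomposition step and your elegant mass-exhaustion argument for $\{\tau\le t\}$ go through once this ratio is established---but establishing it requires the full asymptotic, not the simplified one you wrote down. Finally, your tightness paragraph is too vague; the paper uses a direct Kolmogorov--Chentsov fourth-moment bound together with the monotonicity $\p_x(\Gamma_T\le s)\le \p_0(\Gamma_T\le s)$ and the ratio asymptotic, and you should do something comparably concrete.
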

This result shows that a Brownian motion which is forced to spend \textit{less than~$s$ time units outside a bounded interval} will end up \textit{not leaving the interval at all}, as announced above. In fact, \eqref{eqn:sde} is precisely the SDE of the taboo process considered in~\cite{Kni69}, i.e., of a Brownian motion conditioned to stay in the interval.
\\
Our methodology also allows to exhibit the exact asymptotic behavior of~$\p_y(\Gamma_T \le s)$, as~$T \to \infty$, explicitly:
\begin{prop}
\label{prop:asymp}
As~$T \to \infty$, we have
$$\p_y(\Gamma_T \le s) \sim \frac{\cos\left(\frac{\pi y}{2}\right) 2^{\frac{19}{6}}}{\sqrt{3} \pi^{\frac{13}{6}} s^{\frac{1}{6}} T^{\frac{1}{3}}} \exp\left( -\frac{\pi^2}{8} T + \frac{3}{2^{\frac{7}{3}}} \pi^{\frac{4}{3}} s^{\frac{1}{3}} T^{\frac{2}{3}} - \frac{3}{2^{\frac{5}{3}}} \pi^{\frac{2}{3}} s^{\frac{2}{3}} T^{\frac{1}{3}} +\frac{\pi^2+12}{24}s \right), \quad y \in (-1,1),$$
as well as
\begin{equation}
\label{eqn:outsideasymptotics}   
\lim_{T\to\infty} \frac{\p_x(\Gamma_T \le s)}{\p_y(\Gamma_T \le s)} = 0,\quad x\in \R \setminus (-1,1),\ y\in(-1,1).
\end{equation}
\end{prop}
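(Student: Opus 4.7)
The natural route combines Feynman--Kac with Laplace inversion via the saddle-point method. Setting $V(x):=\1_{\{\abs x\abs\ge 1\}}$, we have $\Gamma_T=\int_0^T V(B_t)\,\D t$ and hence
\[
\E_y\!\left[e^{-\lambda \Gamma_T}\right]=\bigl(e^{-TL_\lambda}\1\bigr)(y),\qquad L_\lambda:=-\tfrac12\partial_x^2+\lambda V.
\]
Let $\mu(\lambda)$ denote the principal eigenvalue of $L_\lambda$ and $\phi_\lambda$ its positive $L^2$-normalised eigenfunction. The next bound state of $L_\lambda$ satisfies $\mu_1(\lambda)\to\pi^2/2$ while the essential spectrum $[\lambda,\infty)$ recedes to $+\infty$, so the spectral gap is asymptotically $3\pi^2/8$, giving
\[
\E_y\!\left[e^{-\lambda \Gamma_T}\right]\sim\phi_\lambda(y)\langle\phi_\lambda,1\rangle\,e^{-\mu(\lambda)T},\qquad T\to\infty.
\]
The desired tail is recovered by Laplace inversion,
\[
\p_y(\Gamma_T\le s)=\frac{1}{2\pi i}\int_{c-i\infty}^{c+i\infty}\frac{e^{\lambda s}}{\lambda}\E_y\!\left[e^{-\lambda\Gamma_T}\right]\D\lambda,
\]
to be evaluated by steepest descent.

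The spectral data are explicit: matching a cosine ansatz on $(-1,1)$ with an exponential one on $\abs x\abs\ge1$ gives the transcendental equation $\sqrt{2\mu}\tan(\sqrt{2\mu})=\sqrt{2(\lambda-\mu)}$. Writing $\sqrt{2\mu}=\pi/2-\delta$ and iterating produces the large-$\lambda$ expansion
\[
\mu(\lambda)=\frac{\pi^2}{8}-\frac{\pi^2}{4\sqrt{2\lambda}}+\frac{3\pi^2}{16\lambda}-\frac{\pi^2/2+\pi^4/96}{(2\lambda)^{3/2}}+O(\lambda^{-2}),
\]
and normalising $\phi_\lambda$ yields $\phi_\lambda(y)\langle\phi_\lambda,1\rangle=\tfrac{4}{\pi}\cos(\pi y/2)(1+O(\lambda^{-1/2}))$, consistent with the limiting first Dirichlet eigenfunction on $(-1,1)$.

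Setting $F(\lambda):=\lambda s-\mu(\lambda)T$, the saddle equation $F'(\lambda^\star)=0$ places $\lambda_0:=\pi^{4/3}T^{2/3}/(2^{7/3}s^{2/3})$ at leading order, with a subleading shift $\eps_1:=-\pi^{2/3}T^{1/3}/(2^{2/3}s^{1/3})$. Retaining every contribution that survives at order $O(1)$ or larger---the $\lambda s$ and $\pi^2T/(4\sqrt{2\lambda})$ terms at $\lambda_0$ (combined coefficient $3/2^{7/3}$), the $-3\pi^2T/(16\lambda)$ term at $\lambda_0$, the $(\pi^2/2+\pi^4/96)T/(2\lambda)^{3/2}$ term at $\lambda_0$ (yielding $(2+\pi^2/24)s$), and the Taylor corrections $F'(\lambda_0)\eps_1=-3s$ and $\tfrac12F''(\lambda_0)\eps_1^2=3s/2$---produces precisely the exponent in the statement, with final $s$-coefficient $2+\pi^2/24-3/2=(\pi^2+12)/24$. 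The Gaussian fluctuations add $1/\sqrt{2\pi F''(\lambda^\star)}$; combined with $\phi_{\lambda^\star}(y)\langle\phi_{\lambda^\star},1\rangle/\lambda^\star\approx 4\cos(\pi y/2)/(\pi\lambda^\star)$, elementary simplification reproduces exactly the prefactor $\cos(\pi y/2)\,2^{19/6}/(\sqrt3\,\pi^{13/6}s^{1/6}T^{1/3})$.

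For~\eqref{eqn:outsideasymptotics}, note that $\phi_\lambda(x)=\phi_\lambda(\pm1)\exp(-\sqrt{2(\lambda-\mu(\lambda))}(\abs x\abs-1))$ for $\abs x\abs\ge1$, so the Laplace integrand gains the extra factor $\exp(-\sqrt{2(\lambda-\mu)}(\abs x\abs-1))$. At the saddle $\lambda^\star\sim T^{2/3}$ this introduces $\exp(-C(\abs x\abs-1)T^{1/3})\to 0$, forcing the ratio to vanish. The principal technical obstacle is making the saddle-point step rigorous: the spectral approximation of $\E_y[e^{-\lambda\Gamma_T}]$ must be quantified \emph{uniformly} in $\lambda$ along the inversion contour passing through the escaping saddle $\lambda^\star\to\infty$, and the non-principal spectral contributions (remaining bound states, receding essential spectrum) must be controlled there---plausibly via refined resolvent estimates or an explicit excursion-theoretic representation of the Feynman--Kac kernel, concatenating the known transition densities of Brownian motion inside and outside $(-1,1)$.
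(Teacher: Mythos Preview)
Your approach is genuinely different from the paper's and the formal computation is correct; the saddle-point scheme reproduces both the exponent and the prefactor exactly. The paper instead takes the Laplace transform in the \emph{time} variable $T$ (using a formula from Borodin--Salminen for the density of the pair $(\Gamma_T,\text{local time at }\pm1)$), analyses its behaviour near the rightmost singularity $-\pi^2/8$, and inverts via Ingham's Tauberian theorem. These two routes are dual: you Laplace-transform in the value variable and invert by steepest descent; they Laplace-transform in time and invert by a Tauberian argument. Your spectral functions $\mu(\lambda),\phi_\lambda$ correspond to the paper's $v,u,w$ after the substitution $\lambda\leftrightarrow$ (Laplace variable in $T$).

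The gap you flag is exactly the one that matters, and it is not minor. On your side, the spectral asymptotic $\E_y[e^{-\lambda\Gamma_T}]\sim\phi_\lambda(y)\langle\phi_\lambda,1\rangle e^{-\mu(\lambda)T}$ is a large-$T$ statement for fixed $\lambda$; at the saddle $\lambda^\star\sim T^{2/3}$ you need it uniformly, and along the vertical inversion contour $\lambda$ is complex, so $L_\lambda$ is non-self-adjoint and naive spectral projection does not apply. The paper faces the mirror-image obstacle: Ingham's theorem requires the function $T\mapsto e^{\pi^2T/8}\p_y(\Gamma_T\in(0,s])$ to be non-decreasing, which is not obvious for any deterministic $y$. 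Their resolution is clean and worth knowing: they observe that if $B_0\sim\nu$, the quasi-stationary distribution on $(-1,1)$, then monotonicity is an immediate consequence of the Markov property and the defining relation $\p_\nu(\tau\ge t,B_t\in\D y)=e^{-\pi^2t/8}\D\nu(y)$. This yields the asymptotics for $\p_\nu(\Gamma_T\le s)$; they then show $\p_y(\Gamma_T\in(0,s])/\p_\nu(\Gamma_T\in(0,s])\to\tfrac{4}{\pi}\cos(\pi y/2)$ by decomposing at the first exit time $\tau$ and comparing the exit-time density to an exponential (only the leading eigenvalue in the exit-time series survives). For~\eqref{eqn:outsideasymptotics} they use a simple coupling monotonicity $\p_y(\Gamma_T\le s)\le\p_x(\Gamma_T\le s)$ for $|y|\ge|x|$ and squeeze $x\notin(-1,1)$ between $y\nearrow1$. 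Your resolvent/excursion proposal could presumably be pushed through, but the qsd trick plus the exit-time comparison is substantially shorter and avoids any complex-analytic uniformity issues.
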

This asymptotic behavior is of interest in its own right due to the explicitness of the unusual polynomial and subexponential terms as well as the discontinuity w.r.t.~the starting point implied by~\ref{eqn:outsideasymptotics}. It should be compared to the probability that~$B$ never leaves~$(-1,1)$: As~$T \to \infty$, the classical formula for first exit probabilities (see, e.g., Example~5a in~\cite{DS53}) yields
\begin{align}
\label{eqn:asympGamma0}
\p_y(\Gamma_T=0)
={}& \p_y(\abs B_t\abs \lne 1 \text{ for all } t \in [0,T]) \notag\\
={}& \frac{4}{\pi} \sum_{k=0}^\infty \frac{(-1)^k}{2k+1} \cos\left(\frac{(2k+1) \pi y}{2}\right) \e^{-\frac{(2k+1)^2\pi^2}{8}T} \notag
\\
\sim{}& \frac{4}{\pi} \cos\left(\frac{\pi y}{2}\right) \e^{-\frac{\pi^2}{8}T}, \quad  y \in (-1,1).
\end{align}
Note that, observing $$\frac{\p_y(\Gamma_T =0)}{\p_y(\Gamma_T \le s)}=\e^{-c_s T^{\frac{2}{3}} (1+o(1))} \to 0, \quad T \to \infty,$$ for a constant $c_s \gne 0$, Theorem~\ref{thm:main} may come as a surprise as~$\{\Gamma_T\leq s\}$ is a set of asymptotically strictly larger probability as compared to $\{\Gamma_T =0\}=\{\abs B_t\abs \lne 1 \text{ for all } t \in [0,T] \}$.

Let us come to the case when the Brownian motion does not start in~$[-1,1]$, as assumed until here. In this case, Corollary~\ref{cor:outside} below will imply that we cannot expect a limiting process to exist on the Wiener space~$\CC([0,\infty),\R)$ or the Skorokhod space~$\mathcal{D}([0,\infty),\R)$ since such a process would have to jump into~$(-1,1)$ immediately.
\begin{cor}
\label{cor:outside}
Fix~$\abs y\abs>1$ and set $\tau := \inf\{t \ge 0: \abs B_t\abs =1\}$. Then we have $$\lim_{T \to \infty} \p_y(\tau \ge \eps \,\abs\, \Gamma_T \le s) = 0, \quad  \text{for any $\eps \gne 0$}.$$
\end{cor}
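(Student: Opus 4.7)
By symmetry in $y \mapsto -y$ I assume $y \gne 1$. If $\eps \ge s$ the event $\{\tau \ge \eps, \Gamma_T \le s\}$ is (a.s.) empty for $T \ge s$, so assume $\eps \lne s$ and fix $\eta \in (0, \eps)$. Continuity of $B$ together with $y \gne 1$ gives $B_t \gne 1$ on $[0, \tau)$, hence $\Gamma_\tau = \tau$ a.s., and for $T \ge s$ the inclusion $\{\Gamma_T \le s\} \subseteq \{\tau \le s\}$ holds up to a null set. The strong Markov property at $\tau$, combined with the symmetry $\p_{-1} = \p_1$ for the event in question, yields
\begin{align*}
\p_y(\tau \ge \eps,\, \Gamma_T \le s) &= \E_y\!\left[\1_{\{\eps \le \tau \le s\}}\, \p_1(\Gamma_{T-\tau} \le s-\tau)\right], \\
\p_y(\Gamma_T \le s) &\ge \p_y(\tau \le \eta)\, \p_1(\Gamma_T \le s - \eta),
\end{align*}
where the second line uses that $T \mapsto \p_1(\Gamma_T \le s)$ is decreasing and $s \mapsto \p_1(\Gamma_T \le s)$ is increasing. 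The same monotonicity gives $\p_1(\Gamma_{T-\tau} \le s-\tau) \le \p_1(\Gamma_{T-s} \le s-\eps)$ for $\tau \in [\eps, s]$, so
$$ \p_y(\tau \ge \eps \,\abs\, \Gamma_T \le s) \le \frac{1}{\p_y(\tau \le \eta)} \cdot \frac{\p_1(\Gamma_{T-s} \le s-\eps)}{\p_1(\Gamma_T \le s-\eta)}. $$
The prefactor is a finite positive constant, so matters reduce to showing the last ratio tends to $0$.

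Since Proposition~\ref{prop:asymp} only covers interior starting points, I would sandwich $\p_1$ against $\p_0$. The upper bound $\p_1(\Gamma_{T-s} \le s-\eps) = o(\p_0(\Gamma_{T-s} \le s-\eps))$ is precisely \eqref{eqn:outsideasymptotics} with $x = 1$, $y = 0$. For the lower bound, fix $\delta \in (0, \eps - \eta)$ and apply the Markov property at time~$\delta$, using the trivial bound $\Gamma_\delta \le \delta$:
$$ \p_1(\Gamma_T \le s - \eta) \ge \E_1\!\left[\1_{\{B_\delta \in [-\tfrac{1}{2}, \tfrac{1}{2}]\}}\, \p_{B_\delta}(\Gamma_{T-\delta} \le s - \eta - \delta)\right] \ge c\, \p_0(\Gamma_{T-\delta} \le s - \eta - \delta) $$
for some $c \gne 0$ and every large enough $T$. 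The second step uses Proposition~\ref{prop:asymp} uniformly over $z \in [-\tfrac{1}{2}, \tfrac{1}{2}]$, where $\cos(\pi z/2) \ge \cos(\pi/4) \gne 0$. Combining the bounds, it suffices to prove
$$ \frac{\p_0(\Gamma_{T-s} \le s - \eps)}{\p_0(\Gamma_{T-\delta} \le s - \eta - \delta)} \to 0. $$

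Plugging the explicit asymptotic of Proposition~\ref{prop:asymp} into the logarithm of this quotient, the $-\frac{\pi^2}{8} T$ terms cancel and the $O(1)$ residues are absorbed in lower-order errors, leaving
$$ \log \frac{\p_0(\Gamma_{T-s} \le s - \eps)}{\p_0(\Gamma_{T-\delta} \le s - \eta - \delta)} = \frac{3}{2^{7/3}} \pi^{4/3} \bigl[(s-\eps)^{1/3} - (s - \eta - \delta)^{1/3}\bigr]\, T^{2/3} + O(T^{1/3}). $$
The choice $\delta \lne \eps - \eta$ forces $s - \eta - \delta \gne s - \eps$, so the bracket is strictly negative and the quotient decays like $\exp(-c'T^{2/3})$, completing the proof. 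The main obstacle is the quantitative lower bound on $\p_1$: obtaining a $T$-independent $c \gne 0$ requires a version of Proposition~\ref{prop:asymp} that is uniform in the starting point on a compact subinterval of $(-1,1)$, which should however follow directly from its proof.
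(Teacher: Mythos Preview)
Your argument is correct, but it takes a longer road than the paper's. Both proofs use the strong Markov property at~$\tau$ for the numerator and obtain an upper bound of the form $\p_1(\Gamma_{T'}\le s')$. The difference lies in the lower bound on the denominator: the paper stops the process at~$\tau_0:=\inf\{t\ge 0:B_t=0\}$ rather than at~$\tau$, which immediately lands the process at the interior point~$0$ and, using the same monotonicity you invoke, produces the lower bound $\p_y(\Gamma_T\le s)\ge \p_y(\tau_0\le\eps)\,\p_0(\Gamma_{T-\eps}\le s-\eps)$ with the \emph{same} parameters $T-\eps$ and $s-\eps$ as in the numerator. The conditional probability is then bounded by a constant times $\p_1(\Gamma_{T-\eps}\le s-\eps)/\p_0(\Gamma_{T-\eps}\le s-\eps)$, which tends to~$0$ directly by~\eqref{eqn:outsideasymptotics}; the explicit first part of Proposition~\ref{prop:asymp} is never needed.

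Your route instead compares $\p_1(\Gamma_{T-s}\le s-\eps)$ with $\p_1(\Gamma_T\le s-\eta)$ and therefore has to pass through~$\p_0$ twice and invoke the full quantitative asymptotics to handle the mismatched parameters. This works, but the caveat you flag about uniformity is in fact avoidable: the monotonicity in the starting point (Lemma~\ref{l:comparison}) gives $\p_z(\Gamma_{T-\delta}\le s-\eta-\delta)\ge \p_{1/2}(\Gamma_{T-\delta}\le s-\eta-\delta)$ for every $z\in[-\tfrac12,\tfrac12]$, after which the pointwise asymptotic at~$y=\tfrac12$ suffices. So your proof can be made fully rigorous without any uniform version of Proposition~\ref{prop:asymp}; the paper's choice of~$\tau_0$ simply sidesteps the issue altogether.
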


Let us outline the general strategy behind the proof of these results. Roughly speaking, using an analysis of the Laplace transform of~$T\mapsto \p_y(\Gamma_T\leq s) \e^{\frac{\pi^2}{8} T}$ and an application of Tauberian arguments, we are able to prove Proposition~\ref{prop:asymp}. However, the required monotonicity in the Tauberian theorems seems to be difficult to establish for arbitrary starting points~$y \in (-1,1)$. We bypass this problem in two steps: First we apply the Tauberian argument to a specific initial distribution~$\nu$ supported on~$(-1,1)$ for which monotonicity of a suitable function can be established. Then we essentially show how to control the behavior of 
$$\frac{\p_y(\Gamma_T \le s)}{\p_\nu(\Gamma_T \le s) }, \quad y \in (-1,1),$$
using an elementary but insightful argument. Proposition~\ref{prop:asymp} together with tightness of the family of conditional laws will imply our main result. 

Let us now discuss some open problems and further directions of research. Similar to~\cite{BB11} and~\cite{KS16}, it is natural to ask whether one can replace the condition~$\{ \Gamma_T\leq s\}$ by conditioning more generally on~$\{ \Gamma_t\leq f(t)\,\forall t\leq T\}$ where~$f$ belongs to a suitable class of non-decreasing, non-negative functions. The question on the existence of a limiting process and its properties are expected to show an interesting dependence on the growth of~$f$.
\\
Another follow-up question consists in replacing the functional~$\Gamma_T$ by 
\begin{align*}
\tilde{\Gamma}_T:=\int_0^T\1_{\{ |B_t| \geq g(t)\}} \D t
\end{align*}
for some non-decreasing, non-negative function~$g$. The functional~$\tilde{\Gamma}_T$ measures the time a Brownian motion spends outside an interval with time-dependent boundary~$g$. Again it is clear that the growth of the function~$g$ will be the important ingredient. If the function~$g$ grows faster than $t \mapsto \sqrt{t \log(\log(t))}$, then~$\tilde{\Gamma}_{\infty}$ will be finite with probability one as a consequence of the law of the iterated logarithm and the conditioned process will essentially be a Brownian motion. Contrary, if~$g$ is strictly below the fluctuations of Brownian motion, we conjecture that the limiting process exhibits a similarly extreme entropic repulsion behavior as in the present paper, and interesting intermediate cases can be expected.
\\
Yet another open question is the precise asymptotic behavior of~$T\mapsto\p_y(\Gamma_T\leq s)$ for~$y\in \R \setminus (-1,1)$. Here we conjecture, e.g.,
\begin{align*}
\p_y(\Gamma_T \le s) \sim{}& \frac{2^{\frac{17}{6}} s^{\frac{1}{6}}}{\sqrt{3} \pi^{\frac{11}{6}} T^{\frac{2}{3}}} \exp\left( -\frac{\pi^2}{8} T + \frac{3}{2^{\frac{7}{3}}} \pi^{\frac{4}{3}} s^{\frac{1}{3}} T^{\frac{2}{3}} - \frac{3}{2^{\frac{5}{3}}} \pi^{\frac{2}{3}} s^{\frac{2}{3}} T^{\frac{1}{3}} +\frac{\pi^2+12}{24}s \right)\\
\sim{}& \left(\frac{\pi s}{2 T}\right)^{\frac{1}{3}} \p_0(\Gamma_T \le s), \quad y \in \{-1,1\},\ T \to \infty,
\end{align*}
which would, in particular, be in accordance with \eqref{eqn:outsideasymptotics}. We remark that we are able to prove the corresponding asymptotics of the Laplace transform (cf.~Lemma~\ref{l:asympShat} below) but the inversion requires a priori knowledge of monotonicity-type properties of~$T\mapsto\p_y(\Gamma_T\leq s) \e^{\frac{\pi^2}{8} T}$, which, unfortunately, we cannot ensure for \textit{any} deterministic starting point~$y \in \R$.
\\ \\
The structural outline of this paper is as follows. In Section~\ref{sec:proofofasymptotics} we prove Proposition~\ref{prop:asymp} following the strategy mentioned above: After some preliminaries in Section~\ref{sec:asympprelim}, Section~\ref{sec:laplaceanalysis} contains a thorough analysis of the Laplace transform and its asymptotic behavior. In Section~\ref{sec:inversionqsd}, this is used to determine the exact asymptotic behavior of the~$T\mapsto\p_{\nu}(\Gamma_T\leq s)$ for some very specific initial distribution~$\nu$. In Section~\ref{sec:startingpointsgeneral}, we finally study the precise dependence of~$T\mapsto\p_y(\Gamma_T\leq s)$ on the starting point~$y$. Section~\ref{sec:proofofmaintheorem} finishes the proof of Theorem~\ref{thm:main} and contains the proof of Corollary~\ref{cor:outside}.

\section{Proof of Proposition~\ref{prop:asymp}} \label{sec:proofofasymptotics}
\subsection{Preliminaries} \label{sec:asympprelim}
Let us now start with the proof of Proposition~\ref{prop:asymp}. To this end, let~$y \in [-1,1]$. Recalling the behavior of~$\p_y(\Gamma_T=0)$ cited in~\eqref{eqn:asympGamma0} for~$y \in (-1,1)$ and observing~$\p_y(\Gamma_T = 0)=0$ for~$y \in \{-1,1\}$, we may focus on the analysis of the function defined by $$R:=R_{y,s}: [0,\infty) \to [0,1], \quad T \mapsto \p_y(\Gamma_T \in (0,s]).$$ Let $$H:=\{\lambda \in \C: \op{Re}(\lambda) \gne 0\}$$ be the complex right half-plane. The key tool in deriving the asymptotics of~$R$ is Theorem~$1'$ in~\cite{Ing41}, which is of Tauberian type. It connects the asymptotic behavior of the Laplace transform of~$R$ at~$0$ with that of~$R$ at~$\infty$. However, we will not be able to apply Lemma~\ref{l:tauberianthm} directly to~$R$ due to the required regularity conditions.

\begin{lem}[Ingham '41]
\label{l:tauberianthm}
Let $S: [0,\infty) \to [0,\infty)$ be a non-decreasing function such that
\begin{equation}
\label{eqn:Shatdef}
\widehat{S}(\lambda):= \lambda \int_0^\infty S(T) \e^{-\lambda T} \D T \in \C, \quad \lambda \in H,
\end{equation}
converges. Let~$D \subseteq \C \setminus \{0\}$ be a domain containing all positive real numbers. Furthermore, let $U,V: D \to \C$ be holomorphic with $U(\lambda),V(\lambda) \in \R_{\gne 0}$ for sufficiently small~$\lambda \in \R_{\gne 0}$ and with $$\widehat{S}(\lambda) \sim U(\lambda) \e^{V(\lambda)}, \quad \text{for real } \lambda \searrow 0,$$ as well as
\begin{equation}
\label{eqn:taubOasymp}
\widehat{S}(\lambda) = O\left(U(\abs \lambda\abs)\e^{V(\abs \lambda\abs)}\right), \quad \lambda \to 0,\ \lambda \in H.
\end{equation}
Denoting by~$d: D \to [0,\infty]$ the distance function to~$D^c$, assume that there is some~$k \gne 1$ with
\begin{equation}
\label{eqn:taubcond1}
\lambda V'(\lambda) \searrow -\infty, \quad \frac{\sqrt{V''(\lambda)} }{\abs V'(\lambda)\abs} = o\left(\frac{d(\lambda)}{\lambda}\right), \quad \lambda^k V'(\lambda) \nearrow 0, \qquad \text{for real } \lambda \searrow 0,
\end{equation}
as well as 
\begin{equation}
\label{eqn:taubcond2}
\sup_{\substack{z \in \C,\, \abs z\abs \lne d(\lambda)}} \abs V''(\lambda + z)\abs = O(V''(\lambda)), \quad \sup_{\substack{z \in \C,\, \abs z\abs \lne d(\lambda)}} \abs U(\lambda + z)\abs = O(U(\lambda)), \qquad \text{for real } \lambda \searrow 0.
\end{equation}
Then we have $$S(T) \sim \frac{U(h(T))\e^{T h(T) + V(h(T))}}{h(T) \sqrt{2 \pi V''(h(T))}}, \quad T \to \infty,$$ where~$h$ is the inverse of~$-V'\abs_{(0,\eps)}$ for sufficiently small~$\eps \gne 0$.
\end{lem}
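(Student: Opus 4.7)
The plan is to combine a saddle-point analysis of the inverse Laplace transform with a classical Tauberian smoothing argument. From \eqref{eqn:Shatdef} the function $\widehat{S}(\lambda)/\lambda$ is the ordinary Laplace transform of $S$, so Mellin--Bromwich inversion yields, for any suitable $c>0$,
\[
S(T) = \frac{1}{2\pi i} \int_{c-i\infty}^{c+i\infty} \frac{\widehat{S}(\lambda)}{\lambda}\, e^{\lambda T}\, \D \lambda,
\]
at least in a suitably regularised sense. Replacing $\widehat{S}$ by its asymptotic surrogate $U(\lambda) e^{V(\lambda)}$, the phase of the integrand is $\lambda T + V(\lambda)$, and its unique positive critical point is determined by $V'(\lambda^\ast) = -T$, i.e.\ $\lambda^\ast = h(T)$; by hypothesis $h(T) \searrow 0$ as $T \to \infty$.

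\textbf{Saddle-point step.} I would deform the vertical contour to pass through $h(T)$ along the direction of steepest descent. Holomorphy of $U$ and $V$ on the domain $D$ together with the $O$-type bound \eqref{eqn:taubOasymp} legitimises this deformation. A second-order Taylor expansion near the saddle gives
\[
\lambda T + V(\lambda) = T h(T) + V(h(T)) + \tfrac{1}{2} V''(h(T))(\lambda - h(T))^2 + (\text{higher order}),
\]
with natural Gaussian width $\sigma(T) := 1/\sqrt{V''(h(T))}$. The first two clauses of \eqref{eqn:taubcond1} guarantee that $\sigma(T) \ll d(h(T))$, so the steepest-descent arc remains safely inside $D$; the local-boundedness bounds \eqref{eqn:taubcond2} then allow freezing $U(\lambda)$ at $U(h(T))$ and controlling the cubic-and-higher remainder uniformly on the Gaussian window. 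Integrating out the resulting Gaussian produces the prefactor
\[
\frac{U(h(T))}{h(T)\sqrt{2\pi V''(h(T))}}\, e^{T h(T) + V(h(T))},
\]
which matches the claimed asymptotic.

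\textbf{Tails, Tauberian smoothing, and main obstacle.} Off the saddle, $\op{Re}(\lambda T + V(\lambda))$ drops quickly, and the $O$-bound \eqref{eqn:taubOasymp} combined with the third clause of \eqref{eqn:taubcond1} (which forbids $V'(\lambda)$ from decaying too slowly as $\lambda \searrow 0$) yields a superpolynomial gain, so the non-saddle portion of the contour is negligible. Since the Laplace inversion only recovers $S$ in an averaged sense, the final step is a Karamata-style smoothing: bracket $S(T)$ between convolutions with narrow approximate identities, invoke $S(T-\delta) \le S(T) \le S(T+\delta)$ from monotonicity, and send $\delta \to 0$ after the asymptotic has been extracted from the smoothed version. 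The chief technical obstacle is synchronising the three competing scales --- the Gaussian width $\sigma(T)$, the distance $d(h(T))$ to $\partial D$, and the rates at which $V'(\lambda)$ and $V''(\lambda)$ blow up as $\lambda \searrow 0$ --- so that the contour deformation, the local Gaussian reduction, and the tail bounds all line up; the conditions \eqref{eqn:taubcond1}--\eqref{eqn:taubcond2} are precisely tailored to make these compatible, and verifying them for the concrete $S$ built from $R_{y,s}$ will presumably reappear as the main burden in the applications to come.
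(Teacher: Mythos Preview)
The paper does not prove this lemma at all: it is stated verbatim as Theorem~$1'$ of Ingham~\cite{Ing41} and used as a black box. The only commentary the paper offers is Remark~\ref{rem:taub}, which explains why the inverse $h$ of $-V'$ exists near $0$, why the hypothesis $S(0)=0$ from Ingham's original formulation may be dropped, and that the $O$-bound~\eqref{eqn:taubOasymp} is stated in a slightly stronger form than Ingham actually requires. So there is nothing to compare your argument against.

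That said, your sketch is a fair high-level account of how Ingham's proof proceeds: the result is indeed a saddle-point asymptotic for the Bromwich integral, with the monotonicity of $S$ supplying the Tauberian ingredient that turns the asymptotic for the smoothed inversion into one for $S$ itself. Two small points of caution if you intend to flesh it out. First, the contour deformation is more delicate than you indicate: the $O$-bound~\eqref{eqn:taubOasymp} only controls $\widehat{S}$ near $0$, so you cannot freely push the entire vertical line through $h(T)$ without separate control far from the real axis; Ingham instead works with a truncated contour and handles the remote parts by a different mechanism. Second, the ``Karamata-style smoothing'' is not quite the right picture: Ingham uses a Ces\`aro-type kernel built directly into the inversion integral, and the monotonicity of $S$ is used to pass from the smoothed asymptotic to the pointwise one via a sandwiching argument on a scale dictated by $\sqrt{V''(h(T))}/|V'(h(T))|$, which is exactly why the middle condition in~\eqref{eqn:taubcond1} enters. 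None of this invalidates your outline, but the precise bookkeeping is substantial.
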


\begin{rem}
\label{rem:taub}
\begin{enumerate}[(a)]
\item In assumption~\eqref{eqn:taubcond1}, the notations~$\searrow$ and~$\nearrow$ denote monotone convergence. In particular, $\lambda V'(\lambda) \searrow -\infty$ for real~$\lambda \searrow 0$ implies $$\frac{\D}{\D \lambda} (\lambda V'(\lambda)) \ge 0 \quad \text{and} \quad V'(\lambda) \lne 0, \qquad \text{for all sufficiently small }\lambda \in \R_{\gne 0}.$$ Thus there exists an~$\eps \gne 0$ with $$V''(\lambda) \ge -\frac{V'(\lambda)}{\lambda} \gne 0, \quad \text{for all }\lambda \in (0,\eps),$$ so that~$-V'\abs_{(0,\eps)}$ is strictly decreasing. Noting~$\lim_{\lambda \searrow 0} -V'(\lambda) = \infty$, it must indeed have an inverse~$h: (T_0, \infty) \to (0,\eps)$ for a suitable~$T_0 \gne 0$.
\item In~\cite{Ing41}, the function~$S$ is assumed to satisfy~$S(0)=0$ because the result itself (but not the proof) is presented in terms of the Laplace-Stieltjes transform:
Assuming in our formulation additionally that~$S(0)=0$, an integration by parts shows that~$\widehat{S}$ is nothing but the Laplace-Stieltjes transform of~$S$ (see the beginning of the proof or, e.g., Proposition I.13.1 in~\cite{Kor04}). From the proof (as well as from the nature of the result), it is evident that the assumptions on~$S$ may be relaxed in the above way if we confine ourselves to defining~$\widehat{S}$ by~\eqref{eqn:Shatdef}.
\item In~\cite{Ing41}, assumption~\eqref{eqn:taubOasymp} is replaced by a weaker condition which does not require uniformity on all of~$H$ but only on the complex wedge $\{\lambda \in H: \abs \lambda\abs \le c \op{Re}(\lambda)\}$ for each~$c \gne 0$. The assumption that~$D$ contains the whole positive axis is relaxed in~\cite{Ing41} as well.
\end{enumerate}
\end{rem}

\subsection{Analysis of the Laplace transform}
\label{sec:laplaceanalysis}
Throughout this section, we consider~$y \in [-1,1]$. Our first step is to compute the Laplace transform of~$R$. For the purpose of readability, let us define
$$v: (0,\infty) \to \C, \quad \lambda \mapsto \sqrt{2 \lambda} \tanh(\sqrt{2 \lambda})$$ and
$$u:= u_{y}: (0,\infty) \to \C, \quad \lambda \mapsto \frac{\cosh(y \sqrt{2\lambda})}{\cosh(\sqrt{2 \lambda})}.$$
These functions are related to the functions~$U$ and~$V$ to which we will apply Ingham's Tauberian theorem (Lemma~\ref{l:tauberianthm}).
The following result essentially is formula~3.1.4.4 in~\cite{BS02}
which can be proved by Kac's theory devoloped in~\cite{Kac49} and~\cite{Kac51}:
\begin{lem}
\label{l:laplacetrafoinside}
For almost every~$T \gne 0$, we have $$R(T) = \frac{2}{\sqrt{2\pi}} \int_0^s \int_0^\infty \rho_{r,x}(T) \D x \D r,$$
where $(\rho_{r,x}: (0,\infty) \to \R)_{r,x \gne 0}$ is a family of functions with Laplace transforms given by $$(\LL(\rho_{r,x}))(\lambda) = u(\lambda) \e^{-v(\lambda)x-\lambda r- \frac{x^2}{2r}} \left(\frac{1}{\sqrt{r}}+ \frac{v(\lambda) x}{2\lambda \sqrt{r^3}}\right),
\quad \lambda,r,x \gne 0.$$
\end{lem}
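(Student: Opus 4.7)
The most natural route is through Kac's Feynman--Kac theory for occupation-time functionals~\cite{Kac49,Kac51}, giving essentially formula~3.1.4.4 in~\cite{BS02}. The starting point is the double Laplace transform
\[
G_{\lambda,\alpha}(y,z)\,\D z \,:=\, \int_0^\infty e^{-\lambda T}\,\E_y\!\left[e^{-\alpha \Gamma_T};\, B_T \in \D z\right]\D T,\qquad \lambda,\alpha > 0,
\]
which is the Green function at level $\lambda$ of the Schr\"odinger-type operator $\tfrac12 \partial_y^2 - \alpha\,\1_{\{|y|\ge 1\}}$. Solving the corresponding resolvent ODE piecewise on $y<-1$, $y\in(-1,1)$, $y>1$ with fundamental solutions $\cosh(y\sqrt{2\lambda}),\sinh(y\sqrt{2\lambda})$ in the middle and $e^{\pm y\sqrt{2(\lambda+\alpha)}}$ outside, matched by continuity and continuity of the derivative at $y=\pm 1$ together with the standard delta-jump condition at the source point, yields $G_{\lambda,\alpha}$ in closed form. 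For $y\in(-1,1)$ and $z>1$ the expression factors as $u(\lambda)$ times an exponential decay in $z-1$, and after algebraic simplification the combination $v(\lambda) = \sqrt{2\lambda}\tanh(\sqrt{2\lambda})$ emerges naturally from the boundary-matching conditions at $y=\pm 1$.

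The next step is to invert the transform in $\alpha$ at fixed $\lambda,T,z$ to recover the joint density of $(\Gamma_T,B_T)$ on the set $\{r>0,\,|z|>1\}$. After the shift $\alpha\mapsto\alpha+\lambda$, which produces the factor $e^{-\lambda r}$, the inversion reduces to the two classical pairs
\[
\LL^{-1}\!\bigl(e^{-x\sqrt{2\alpha}}\bigr)(r) = \frac{x}{\sqrt{2\pi\, r^3}}\,e^{-x^2/(2r)}, \qquad \LL^{-1}\!\Bigl(\tfrac{1}{\sqrt{2\alpha}}\,e^{-x\sqrt{2\alpha}}\Bigr)(r) = \frac{1}{\sqrt{2\pi\, r}}\,e^{-x^2/(2r)},
\]
applied with $x=z-1$. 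These two Gaussian kernels are precisely the source of the two summands $1/\sqrt{r}$ and $v(\lambda)x/(2\lambda\sqrt{r^3})$ sitting inside the common exponential $e^{-v(\lambda)x - x^2/(2r)}$ in the claim, and the prefactor $u(\lambda)$ survives from the inner region.

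To identify $R(T)=\p_y(\Gamma_T\in(0,s])$ with the double integral in the statement, split according to whether $|B_T|\ge 1$ or $|B_T|<1$. On $\{|B_T|\ge 1\}$ the constraint $\Gamma_T>0$ is automatic, and integration of the joint density just obtained over $r\in(0,s]$ and $x=|z|-1\in(0,\infty)$, with a factor~$2$ accounting for the symmetry $B_T\leftrightarrow -B_T$, yields exactly the claimed integrand $\rho_{r,x}(T)$ on this event. On the complementary set $\{\Gamma_T\in(0,s],\,|B_T|<1\}$, a last-passage decomposition at $L_T = \sup\{t\le T:|B_t|=1\}$ combined with time reversal of Brownian motion in $(-1,1)$ killed at the boundary reduces the remaining contribution to an expression of exactly the same form, which merges with the first into the single integral appearing in the lemma. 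Reading off the Laplace transform in $T$ from this expression then gives the claimed formula for $\LL(\rho_{r,x})(\lambda)$.

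The main obstacle I expect is this merging of the two cases $|B_T|\ge 1$ and $|B_T|<1$ into one integral over $x\in(0,\infty)$; the Laplace inversion itself is mechanical once the Green function is in hand, and the atomic part $\p_y(\Gamma_T=0)$, given by~\eqref{eqn:asympGamma0}, plays no role here because $R(T)$ excludes it via the constraint $\Gamma_T>0$.
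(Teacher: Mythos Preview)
The paper does not give a proof here; it simply cites formula 3.1.4.4 of \cite{BS02} and attributes the underlying derivation to Kac's theory \cite{Kac49,Kac51}. Your plan to go through the Feynman--Kac resolvent is therefore exactly the method the paper has in mind, and your identification of $u(\lambda)=\E_y[e^{-\lambda\tau}]$ and of $v(\lambda)$ as the quantity produced by the boundary matching at $\pm 1$ is correct.

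The gap is in your reading of the integration variable $x$. If $x=|B_T|-1$, then for $z>1$ the Green function carries the spatial factor $e^{-(z-1)\sqrt{2(\lambda+\alpha)}}$; after your shift and $\alpha$-inversion this produces the Gaussian kernels $e^{-x^2/(2r)}$ you quote, but \emph{not} the factor $e^{-v(\lambda)x}$, which depends on $\lambda$ alone and cannot arise from the terminal position. Concretely, one finds $G_{\lambda,\alpha}(y,z)+G_{\lambda,\alpha}(y,-z)=\tfrac{2u(\lambda)}{\mu+v(\lambda)}\,e^{-(z-1)\mu}$ with $\mu=\sqrt{2(\lambda+\alpha)}$, whose $\alpha$-inverse is a convolution and not the integrand of the lemma. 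This is exactly why you are pushed into the last-passage/time-reversal ``merging'' on $\{|B_T|<1\}$ that you flag as the main obstacle, and that merging will not reproduce the stated $\rho_{r,x}$ either. A clean route avoids the endpoint altogether: solve the resolvent ODE for $\int_0^\infty e^{-\lambda T}\E_y[e^{-\alpha\Gamma_T}]\,\D T$ directly, subtract the atom at $\Gamma_T=0$, and obtain a rational expression in $\mu$ with $v(\lambda)$ in the denominator. Writing $\tfrac{1}{\mu+v(\lambda)}=\int_0^\infty e^{-v(\lambda)x}e^{-\mu x}\,\D x$ and $\tfrac{1}{\mu}=\int_0^\infty e^{-\mu x}\,\D x$ and \emph{then} applying your two classical inversion pairs yields the formula of the lemma with no case split. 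If you want a probabilistic reading, $x$ is most naturally the boundary local time at $\{-1,1\}$ rather than the terminal position: $\mu+v(\lambda)$ is the Laplace exponent of the inverse boundary local time under the Feynman--Kac weight, with $\mu$ contributed by outward excursions and $v(\lambda)$ by inward ones.
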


The next lemma will show that one can exchange the order of integration to obtain the Laplace transform of~$R$. The resulting formula can be extended to the half-plane $$H^\leftarrow:=\left\{\lambda\in \C: \op{Re}(\lambda) \gne -\frac{\pi^2}{8}\right\}= H - \frac{\pi^2}{8}:$$

\begin{lem}
\label{l:laplacetrafooutside}
The functions~$u$ and~$v$ as well as~$\lambda \mapsto \frac{v(\lambda)}{2 \lambda}$ are well-defined and holomorphic on~$H^\leftarrow$. (The singularity of the latter in~$0$ is removable.) Moreover, the Laplace transform of~$R$ is well-defined on~$H^\leftarrow$ and satisfies
\begin{equation}
\label{eqn:laplacetrafooutside}
(\LL(R))(\lambda) = \frac{2 u(\lambda)}{\sqrt{2\pi}} \int_0^s \int_0^\infty \e^{-v(\lambda)x-\lambda r- \frac{x^2}{2r}} \left(\frac{1}{\sqrt{r}}+ \frac{v(\lambda) x}{2\lambda \sqrt{r^3}}\right) \D x \D r, \quad \lambda \in H^\leftarrow.
\end{equation}
\end{lem}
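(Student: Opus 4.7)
The plan has three ingredients: holomorphy of $u$, $v$, and $v(\lambda)/(2\lambda)$ on $H^\leftarrow$; the formula (\ref{eqn:laplacetrafooutside}) on the positive real axis via Tonelli's theorem; and extension to $H^\leftarrow$ by analytic continuation. For the holomorphy, the key observation is that $\cosh$ is even, so the expansions $\cosh(\sqrt{2\lambda}) = \sum_{n\ge 0}(2\lambda)^n/(2n)!$, $\cosh(y\sqrt{2\lambda}) = \sum_{n\ge 0}(2y^2\lambda)^n/(2n)!$, and $\sqrt{2\lambda}\sinh(\sqrt{2\lambda}) = 2\lambda\sum_{n\ge 0}(2\lambda)^n/(2n+1)!$ all define entire functions of $\lambda$. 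Thus $u$, $v$, and $v(\lambda)/(2\lambda)$ are meromorphic on $\C$, with poles confined to the zeros of $\cosh(\sqrt{2\lambda})$, namely $\{-(2k+1)^2\pi^2/8 : k \in \Z_{\ge 0}\}$; since these all lie in $\R_{\le -\pi^2/8}$, holomorphy on $H^\leftarrow$ follows. The apparent singularity of $v(\lambda)/(2\lambda)$ at $0$ is removable, as the quotient tends to $1$.

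For the formula, fix real $\lambda > 0$. The densities $\rho_{r,x}$ arising from Kac's construction are nonnegative, so Tonelli permits exchanging the Laplace integral in $T$ with the $(r,x)$-integration in Lemma~\ref{l:laplacetrafoinside}, producing (\ref{eqn:laplacetrafooutside}) for $\lambda \in (0,\infty)$. To extend the identity to $H^\leftarrow$, I would substitute $x = \sqrt{r}\,w$ in the inner integral, reducing it to $\int_0^\infty \e^{-v(\lambda)\sqrt{r}\,w - w^2/2}\bigl(1 + v(\lambda)w/(2\lambda\sqrt{r})\bigr)\D w$; completing the square dominates this by a Gaussian, yielding a bound $C(\lambda)(1 + 1/\sqrt{r})$ with $C(\lambda)$ locally bounded on $H^\leftarrow$. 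Integrability in $r$ over $[0,s]$ follows, giving absolute convergence of the right-hand side on $H^\leftarrow$; Morera combined with dominated convergence on compact subdomains then gives holomorphy. For the left-hand side, a Feynman--Kac argument ensures that $(\LL(R))(\lambda)$ itself is defined on $H^\leftarrow$: Chebyshev yields $R(T) \le \e^{\mu s}\E_y[\e^{-\mu\Gamma_T}]$ for any $\mu > 0$, and Feynman--Kac gives exponential decay at the rate $\lambda_0(\mu)$, the bottom of the spectrum of $-\tfrac{1}{2}\Delta + \mu\1_{\{|\cdot|\ge 1\}}$; since $\lambda_0(\mu) \nearrow \pi^2/8$ as $\mu\to\infty$, one concludes $R(T) = O(\e^{-(\pi^2/8 - \eps)T})$ for every $\eps > 0$. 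Hence $\LL(R)$ is holomorphic on $H^\leftarrow$, and the identity theorem applied to the agreement on $(0,\infty)$ extends (\ref{eqn:laplacetrafooutside}) to all of $H^\leftarrow$.

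The main obstacle is the uniformity of the estimates near the boundary $\{\op{Re}(\lambda) = -\pi^2/8\}$, where $v(\lambda)$ blows up at the simple pole $-\pi^2/8$. One must verify that the Gaussian damping in $w$ continues to overpower the factor $\e^{-v(\lambda)\sqrt{r}\,w}$ uniformly on compact subdomains of $H^\leftarrow$, and that the $1/\sqrt{r}$ prefactor arising from the $v(\lambda)x/(2\lambda\sqrt{r^3})$ term remains integrable on $[0,s]$ (which it does, since $\int_0^s r^{-1/2}\D r = 2\sqrt{s}$). Handling these uniform bounds carefully is what makes the verification of the analytic continuation to $H^\leftarrow$ (as opposed to merely some small neighborhood of $(0,\infty)$) technically substantive.
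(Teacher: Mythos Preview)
Your proof is correct and follows essentially the same route as the paper: holomorphy via the evenness of the relevant functions in $\sqrt{2\lambda}$, Tonelli on the positive real axis, a locally uniform Gaussian bound plus Morera for the right-hand side, and then the identity theorem. The only substantive difference is your separate Feynman--Kac/spectral argument for the convergence of $\LL(R)$ on $H^\leftarrow$; the paper does not do this, and it is in fact unnecessary: once the right-hand side is known to be holomorphic on $H^\leftarrow$ and to coincide with $\LL(R)$ on $(0,\infty)$, Pringsheim's theorem for Laplace transforms of nonnegative functions forces the abscissa of convergence of $\LL(R)$ to lie at or left of $-\pi^2/8$. Your closing concern about uniformity near the boundary is also not a genuine obstacle, since compact subdomains of $H^\leftarrow$ stay bounded away from the pole at $-\pi^2/8$, so $v$ is bounded there and the Gaussian factor in $w$ always dominates.
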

\begin{proof}
The functions~$z \mapsto z \tanh(z)$ and~$z \mapsto \frac{\tanh(z)}{z}$ as well as~$z \mapsto \frac{\cosh(yz)}{\cosh(z)}$ are well-defined (after removing the singularity of the second in~$0$), holomorphic and even on $$\left\{z \in \C: \frac{z^2}{2} \in H^\leftarrow\right\} \subseteq \C \setminus \left\{\frac{(2n+1)\pi}{2} i: n \in \Z\right\}.$$ Consequently, the three functions mentioned in the lemma are well-defined and holomorphic, independent of the chosen branch of the square root.\\
For all~$r,x \gne 0$, this implies that the Laplace transform of the function~$\rho_{r,x}$ introduced in Lemma~\ref{l:laplacetrafoinside} has a (unique) holomorphic extension to~$H^\leftarrow$. Now let~$K \subseteq H^\leftarrow$ be compact. By continuity, $\bigcup_{\lambda \in K} \{u(\lambda),v(\lambda),\frac{v(\lambda)}{2\lambda},\lambda\}$ is bounded by some~$m_K \gne 0$. Letting~$Z_r \sim \NN(m_Kr,r)$ for~$r \gne 0$, we obtain
\begin{align*}
{}&\int_0^s \int_0^\infty \sup_{\lambda \in K} \left\abs \frac{u(\lambda)}{\sqrt{2\pi}}\e^{-v(\lambda)x-\lambda r- \frac{x^2}{2r}} \cdot 1 \cdot \left(\frac{1}{\sqrt{r}}+ \frac{v(\lambda) x}{2\lambda \sqrt{r^3}}\right)\right\abs \D x\D r\\
\le{}&\int_0^s \int_0^\infty \frac{m_K}{\sqrt{2\pi}} \e^{m_K x+m_Ks-\frac{x^2}{2r}} \e^{\frac{m_K^2(s-r)}{2}} \left(\frac{1}{\sqrt{r}}+ \frac{m_K x}{\sqrt{r^3}}\right) \D x\D r\\
={}& m_K\e^{m_Ks+\frac{m_K^2s}{2}} \int_0^s \int_0^\infty \left(1+ \frac{m_Kx}{r} \right)\cdot \frac{1}{\sqrt{2 \pi r}}  \e^{-\frac{(x-m_Kr)^2}{2r}}\D x\D r\\
={}& m_K\e^{m_K s+\frac{m_K^2s}{2}} \int_0^s \E\left[\1_{\{Z_r \ge 0\}}\left(1+\frac{m_K Z_r}{r}\right)\right] \D r\\
\le{}& m_K\e^{m_K s+\frac{m_K^2s}{2}} \int_0^s 1+ \frac{m_K}{r} (m_K r + \E\abs Z_r-m_K r\abs) \D r\\
={}& m_K\e^{m_K s\frac{m_K^2s}{2}} \int_0^s 1+ m_K^2+ m_K\sqrt{\frac{2}{\pi r}} \D r\\
\lne{}& \infty.
\end{align*}
The dominated convergence theorem yields the continuity of the right hand side of~\eqref{eqn:laplacetrafooutside} as a function of~$\lambda \in H^\leftarrow$. Now let~$\gamma$ be a closed path in~$H^\leftarrow$. Bounding the path integral by the length of~$\gamma$ multiplied with the supremum of the integrand on~$K:= \op{trace}(\gamma)$, the above computation implies
$$\int_0^s \int_0^\infty \int_\gamma \left\abs u(\lambda)\e^{- v(\lambda) x-\lambda r-\frac{x^2}{2r}} \left(\frac{1}{\sqrt{r}}+ \frac{v(\lambda) x}{2\lambda \sqrt{r^3}}\right)\right\abs \D \lambda \D x\D r \lne \infty,$$
which allows us to apply Fubini's theorem. Since~$\LL(\rho_{r,x})$ is holomorphic, Cauchy's integral theorem implies
\begin{align*}
{}&\int_\gamma \int_0^s \int_0^\infty u(\lambda)\e^{-v(\lambda)x-\lambda r-\frac{x^2}{2r}} \left(\frac{1}{\sqrt{r}}+ \frac{v(\lambda) x}{2\lambda\sqrt{r^3}}\right) \D x \D r \D \lambda\\
={}& \int_0^s \int_0^\infty \int_\gamma (\LL(\rho_{r,x}))(\lambda) \D \lambda \D x \D r
=0.
\end{align*}
By Morera's theorem, $$H^\leftarrow \to \C, \quad \lambda \mapsto \frac{2u(\lambda)}{\sqrt{2\pi}}  \int_0^s \int_0^\infty \e^{-v(\lambda)x-\lambda r-\frac{x^2}{2r}} \left(\frac{1}{\sqrt{r}}+ \frac{v(\lambda) x}{2\lambda\sqrt{r^3}}\right) \D x \D r$$ is holomorphic. By Lemma~\ref{l:laplacetrafoinside} and Tonelli's theorem, it coincides with~$\LL(R)$ on~$(0,\infty)$ and hence, by the identity theorem for holomorphic function, on all of~$H^\leftarrow$.
\end{proof}

Introducing $$w: H^\leftarrow \to \C, \quad \lambda \mapsto \frac{(v(\lambda))^2}{2} - \lambda,$$ we can rewrite the Laplace transform of~$R$ in the following way, which is more suitable for our further analysis:
\begin{lem}
\label{l:laplacetrafosimplified}
For each~$\lambda \in H^\leftarrow \setminus \{0\}$, we have
\begin{align}
\label{Laplace2}
(\LL(R))(\lambda) = \frac{u(\lambda)}{\sqrt{2\pi} \lambda}\left(\sqrt{2\pi} - 2 \e^{w(\lambda) s} \int_0^\infty \e^{-\frac{(x+v(\lambda) \sqrt{s})^2}{2}} \D x + (v(\lambda)-1) \int_0^s \frac{1}{\sqrt{r}} \e^{-\lambda r} \D r\right).
\end{align}
\end{lem}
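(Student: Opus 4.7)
The plan is to evaluate the double integral in~\eqref{eqn:laplacetrafooutside} by first carrying out the inner $x$-integration in closed form and then performing an integration by parts in the outer $r$-variable. For the inner integral, I would complete the square in the exponent, $-v(\lambda)x - \frac{x^2}{2r} = -\frac{(x+rv(\lambda))^2}{2r} + \frac{rv(\lambda)^2}{2}$, and substitute $z=(x+rv(\lambda))/\sqrt{r}$ to obtain
$$\int_0^\infty \e^{-v(\lambda)x - \frac{x^2}{2r}}\,\D x = \sqrt{r}\, \e^{rv(\lambda)^2/2}\, A(r), \qquad A(r) := \int_{v(\lambda)\sqrt{r}}^\infty \e^{-z^2/2}\, \D z.$$
For the piece containing $\frac{v(\lambda) x}{2\lambda \sqrt{r^3}}$, I would use $\frac{x}{r}\e^{-x^2/(2r)} = -\frac{\D}{\D x}\e^{-x^2/(2r)}$ and integrate by parts in $x$, which yields the identity $\int_0^\infty x\, \e^{-v(\lambda)x - x^2/(2r)}\,\D x = r - rv(\lambda) \int_0^\infty \e^{-v(\lambda)x - x^2/(2r)}\,\D x$. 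Using $\frac{rv(\lambda)^2}{2} - \lambda r = rw(\lambda)$, the inner integrand of \eqref{eqn:laplacetrafooutside} collapses to the combination
$$-\frac{w(\lambda)}{\lambda}\, \e^{rw(\lambda)} A(r) + \frac{v(\lambda)}{2\lambda\sqrt{r}}\, \e^{-\lambda r}.$$

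The second summand already contributes an integral of the form $\int_0^s \e^{-\lambda r}/\sqrt{r}\, \D r$, exactly the shape appearing in~\eqref{Laplace2}. For the first summand I would integrate by parts in $r$, choosing $\D v = \e^{rw(\lambda)}\, \D r$ and $u = A(r)$, and using $A'(r) = -\frac{v(\lambda)}{2\sqrt{r}}\e^{-rv(\lambda)^2/2}$. The boundary terms at $r = 0$ and $r = s$ produce the constant $A(0) = \sqrt{\pi/2}$ and the quantity $\e^{sw(\lambda)}A(s)$, while the remainder integral contributes a further multiple of $\int_0^s \e^{-\lambda r}/\sqrt{r}\, \D r$. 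Multiplying everything through by the outer prefactor $\frac{2u(\lambda)}{\sqrt{2\pi}}$, dividing the resulting $-\frac{w(\lambda)}{\lambda}$-times-$I$ contribution by the appropriate powers, and rewriting $A(s) = \int_0^\infty \e^{-(x+v(\lambda)\sqrt{s})^2/2}\, \D x$ via the shift $z = x + v(\lambda)\sqrt{s}$ should assemble the right-hand side of~\eqref{Laplace2}.

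The only real obstacle here is the algebraic bookkeeping: one must carefully combine the two contributions of the form $\int_0^s \e^{-\lambda r}/\sqrt{r}\, \D r$ (one generated by the inner $x$-integration by parts, the other by the outer $r$-integration by parts) and verify that their joint coefficient matches $v(\lambda)-1$ as asserted in~\eqref{Laplace2}. No conceptually new ingredient beyond completion of squares and two integrations by parts is required; however, because the formula will later serve as input for Ingham's Tauberian theorem, it is worth keeping all intermediate pieces visible rather than simplifying prematurely, since the representation chosen in~\eqref{Laplace2} is presumably the one best suited for the subsequent asymptotic analysis of the Laplace transform.
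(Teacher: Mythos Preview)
Your plan is essentially the same as the paper's: the paper first substitutes $x\mapsto\sqrt{r}\,x$ and then uses the algebraic decomposition
\[
1+\frac{v(\lambda)x}{2\lambda\sqrt{r}}=-\frac{w(\lambda)}{\lambda}+\frac{v(\lambda)}{2\lambda}\cdot\frac{x+v(\lambda)\sqrt{r}}{\sqrt{r}},
\]
which is exactly what your completion of the square together with the $x$-integration by parts produces; both then integrate the resulting $\e^{rw(\lambda)}A(r)$ term by parts in $r$. So at the level of method there is nothing new to add.

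There is, however, a genuine issue with the bookkeeping you flag at the end. If you carry your computation through, the two $\int_0^s r^{-1/2}\e^{-\lambda r}\,\D r$ contributions cancel \emph{exactly}: the second summand $\frac{v(\lambda)}{2\lambda}\int_0^s r^{-1/2}\e^{-\lambda r}\,\D r$ and the remainder from the $r$-integration by parts (using $A'(r)=-\frac{v(\lambda)}{2\sqrt{r}}\e^{-rv(\lambda)^2/2}$) are equal with opposite signs. The coefficient you obtain is $0$, not $v(\lambda)-1$. The displayed formula~\eqref{Laplace2} therefore contains an error; the correct identity is
\[
(\LL(R))(\lambda)=\frac{u(\lambda)}{\sqrt{2\pi}\,\lambda}\Bigl(\sqrt{2\pi}-2\e^{w(\lambda)s}\int_0^\infty \e^{-\frac{(x+v(\lambda)\sqrt{s})^2}{2}}\,\D x\Bigr).
\]
The slip in the paper's own derivation occurs in the integration by parts leading to~\eqref{eqn:laplacetrafosimplified2}: when differentiating $\int_0^\infty \e^{-(x+v\sqrt{r})^2/2}\,\D x$ with respect to $r$, the chain-rule factor $v(\lambda)$ from $\frac{\D}{\D r}(v\sqrt{r})=\frac{v}{2\sqrt{r}}$ is dropped, so the integrand $\frac{x+v\sqrt{r}}{2\sqrt{r}}$ should read $\frac{v(x+v\sqrt{r})}{2\sqrt{r}}$, and the final $\frac{1}{2\sqrt{r}}$ should be $\frac{v}{2\sqrt{r}}$. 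One can confirm the corrected formula independently by letting $s\to\infty$: then $R_s(T)\uparrow\p_y(\tau\le T)$ and $\LL(R_s)(\lambda)\to u(\lambda)/\lambda$, which the corrected right-hand side reproduces while~\eqref{Laplace2} does not. Fortunately this discrepancy is harmless for the sequel, since in the proof of Lemma~\ref{l:asympShat} the $(v^\rightarrow(\lambda)-1)\int_0^s r^{-1/2}\e^{-(\lambda-\pi^2/8)r}\,\D r$ term is only used as an $O(1/\lambda)$ correction.
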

\begin{proof}
Let~$\lambda \in H^\leftarrow$. Then we have
\begin{align}
\label{eqn:laplacetrafosimplified1}
\int_0^s \int_0^\infty \e^{-v(\lambda)x\sqrt{r}-\lambda r -\frac{x^2}{2}} \frac{x+v(\lambda) \sqrt{r}}{\sqrt{r}} \D x \D r
={}& \int_0^s \e^{w(\lambda) r} \int_0^\infty \e^{-\frac{(x+v(\lambda) \sqrt{r})^2}{2}} \frac{x+v(\lambda) \sqrt{r}}{\sqrt{r}} \D x \D r \notag\\
={}& \int_0^s \e^{w(\lambda) r} \e^{-\frac{(v(\lambda) \sqrt{r})^2}{2}}\frac{1}{\sqrt{r}} \D r
= \int_0^s \frac{1}{\sqrt{r}} \e^{-\lambda r} \D r.
\end{align}
Noting
\begin{align*}
\int_0^\infty \sup_{r \in [\eps,s]} \left\abs \e^{-\frac{(x+v(\lambda) \sqrt{r})^2}{2}} \frac{x+v(\lambda) \sqrt{r}}{2 \sqrt{r}}\right\abs \D x
\le \int_0^\infty \e^{-\frac{x^2}{2} + x \abs v(\lambda)\abs \sqrt{s} +\frac{\abs v(\lambda)\abs^2 s}{2}} \frac{x+\abs v(\lambda)\abs \sqrt{s}}{2 \sqrt{\eps}} \D x \lne \infty
\end{align*}
for each~$\eps \in (0,s)$, we are allowed to differentiate w.r.t.~$r$ under the $x$-integral in the following computation. Integrating by parts w.r.t.~$r$ in the first step and using~\eqref{eqn:laplacetrafosimplified1} in the last, we get
\begin{align}
\label{eqn:laplacetrafosimplified2}
{}& \int_0^s \int_0^\infty \e^{-v(\lambda) \sqrt{r} x -\lambda r -\frac{x^2}{2}} \D x \D r\notag\\
={}& \int_0^s \e^{w(\lambda) r} \int_0^\infty \e^{-\frac{(x+v(\lambda) \sqrt{r})^2}{2}} \D x \D r\notag\\
={}& \frac{\e^{w(\lambda) s}}{w(\lambda)} \int_0^\infty \e^{-\frac{(x+v(\lambda) \sqrt{s})^2}{2}} \D x - \frac{\sqrt{2\pi}}{2w(\lambda)}
+ \int_0^s \frac{\e^{w(\lambda) r}}{w(\lambda)} \int_0^\infty \e^{-\frac{(x+v(\lambda) \sqrt{r})^2}{2}} \frac{x+v(\lambda) \sqrt{r}}{2 \sqrt{r}}\D x \D r\notag\\
={}& \frac{1}{w(\lambda)} \left(\e^{w(\lambda) s} \int_0^\infty \e^{-\frac{(x+v(\lambda) \sqrt{s})^2}{2}} \D x - \frac{\sqrt{2\pi}}{2}
+ \int_0^s \frac{1}{2\sqrt{r}} \e^{-\lambda r} \D r\right).
\end{align}
Substituting~$x$ by~$\sqrt{r}x$ in the first step and plugging in~\eqref{eqn:laplacetrafosimplified1} and~\eqref{eqn:laplacetrafosimplified2} in the third, we get
\begin{align*}
{}&\int_0^s \int_0^\infty \e^{-v(\lambda)x-\lambda r-\frac{x^2}{2r}} \left(\frac{1}{\sqrt{r}}+ \frac{v(\lambda) x}{2\lambda\sqrt{r^3}}\right) \D x \D r\\
={}& \int_0^s \int_0^\infty \e^{-v(\lambda)x\sqrt{r}-\lambda r-\frac{x^2}{2}} \left(1+ \frac{v(\lambda) x}{2\lambda\sqrt{r}}\right) \D x \D r\\
={}& \int_0^s \int_0^\infty \e^{-v(\lambda)x\sqrt{r}-\lambda r-\frac{x^2}{2}} \left(-\frac{w(\lambda)}{\lambda} + \frac{v(\lambda)}{2\lambda} \cdot \frac{x + v(\lambda) \sqrt{r}}{\sqrt{r}}\right) \D x \D r\\
={}& -\frac{w(\lambda)}{\lambda} \cdot \frac{1}{w(\lambda)} \left(\e^{w(\lambda) s} \int_0^\infty \e^{-\frac{(x+v(\lambda) \sqrt{s})^2}{2}} \D x - \frac{\sqrt{2\pi}}{2}
+ \int_0^s \frac{1}{2\sqrt{r}} \e^{-\lambda r} \D r\right) + \frac{v(\lambda)}{2\lambda} \int_0^s \frac{1}{\sqrt{r}} \e^{-\lambda r} \D r.
\end{align*}
Simplifying this term slightly and inserting it into~\eqref{eqn:laplacetrafooutside}, we obtain~\eqref{Laplace2}.
\end{proof}

Let us now start with the asymptotic analysis of~$\LL(R)$ near its rightmost singularity, i.e., near~$-\frac{\pi^2}{8}$. In view of Ingham's Tauberian theorem (Lemma~\ref{l:tauberianthm}), we translate~$-\frac{\pi^2}{8}$ to the origin. To this end, we set~$u^\rightarrow:= u\big( \,\cdot \,- \frac{\pi^2}{8}\big)$ and similarly define~$v^\rightarrow$ and~$w^\rightarrow$.
\begin{lem}
\label{l:uvwexpansion}
We have $$v^\rightarrow(\lambda) = -\frac{\pi^2}{4\lambda} + \frac{3}{2} +\frac{3+\pi^2}{3\pi^2} \lambda + O(\lambda^2), \quad \lambda \to 0,\ \lambda \in H,$$
and
\begin{align*}
w^\rightarrow(\lambda) = \frac{\pi^4}{32\lambda^2} - \frac{3\pi^2}{8\lambda} +\frac{\pi^2+21}{24} + O(\lambda), \quad \lambda \to 0,\ \lambda \in H.
\end{align*}
For~$y \in (-1,1)$, we uniformly get
\begin{align*}
u^{\rightarrow}(\lambda) \sim
\cos \left(y \sqrt{\frac{\pi^2}{4}-2 \lambda} \right)
\frac{\pi}{2\lambda}, \quad \lambda \to 0,\ \lambda \in H.
\end{align*}
\end{lem}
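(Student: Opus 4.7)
The three expansions all follow from a single local change of variables near the singularity at $\lambda=0$. Setting $\mu:=\lambda-\pi^2/8$ and choosing the branch of the square root so that $z:=\sqrt{2\mu}\to i\pi/2$ as $\lambda\to 0$ in $H$, I introduce $\delta := z - i\pi/2$. The relation $z^2 = 2\mu$ then translates into the quadratic
\begin{equation*}
\delta^2 + i\pi \delta = 2 \lambda,
\end{equation*}
which, by iterated substitution, determines $\delta$ as a function holomorphic in a neighborhood of $0$ with $\delta = -\tfrac{2 i \lambda}{\pi} + O(\lambda^2)$.

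For $v^\rightarrow$, I would use $\cosh(i\pi/2)=0$ and $\sinh(i\pi/2)=i$ together with the hyperbolic addition formulas to obtain $\cosh(z)=i\sinh(\delta)$ and $\sinh(z)=i\cosh(\delta)$, whence $\tanh(z)=\coth(\delta)$. Therefore
\begin{equation*}
v^\rightarrow(\lambda) = \left(\tfrac{i\pi}{2}+\delta\right)\coth(\delta) = \left(\tfrac{i\pi}{2}+\delta\right)\left(\tfrac{1}{\delta}+\tfrac{\delta}{3}+O(\delta^3)\right).
\end{equation*}
The apparent pole in $\delta$ must be re-expressed as a pole in $\lambda$; dividing the quadratic by $\delta$ gives $\tfrac{1}{\delta}=\tfrac{i\pi+\delta}{2\lambda}$, and substituting $i\pi\delta = 2\lambda-\delta^2$ again produces the algebraic identity $\tfrac{i\pi}{2\delta} = -\tfrac{\pi^2}{4\lambda}+\tfrac{1}{2}-\tfrac{\delta^2}{4\lambda}$. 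Inserting $\delta^2 = -4\lambda^2/\pi^2+O(\lambda^3)$ and collecting orders $\lambda^{-1},\lambda^0,\lambda$ yields the stated expansion of $v^\rightarrow$.

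The expansion of $w^\rightarrow$ is then obtained by squaring that of $v^\rightarrow$, halving, and subtracting $\mu=\lambda-\pi^2/8$; the various constant contributions combine to $(21+\pi^2)/24$ after a short arithmetic check. For $u^\rightarrow$, the same branch choice gives $z = i\sqrt{\pi^2/4-2\lambda}$ on a neighborhood of $0$ in $H$, so $\cosh(yz) = \cos(y\sqrt{\pi^2/4-2\lambda})$, while $\cosh(z) = i\sinh(\delta) \sim i\delta \sim 2\lambda/\pi$. Since $\cos(y\pi/2)\neq 0$ for $y\in(-1,1)$, these two factors separate cleanly to yield the asserted asymptotic equivalence. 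The only genuinely delicate step is the conversion of the $\delta$-singularity into a $\lambda$-singularity with the correct coefficients; everything else is routine Laurent-series bookkeeping.
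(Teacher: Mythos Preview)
Your proposal is correct and follows essentially the same approach as the paper: both arguments perform a Laurent expansion near the singularity via the local variable measuring the distance of the square root to $i\pi/2$ (your $\delta$ is $i$ times the paper's $\sqrt{\pi^2/4-2\lambda}-\pi/2$), use the pole structure of $\tanh$/$\coth$ there, and then convert the $\delta$-pole into a $\lambda$-pole. The only cosmetic difference is that the paper inverts $\delta$ in terms of $\lambda$ by an explicit Taylor expansion of $\sqrt{\pi^2/4-2\lambda}$ followed by a geometric series, whereas you exploit the quadratic relation $\delta^2+i\pi\delta=2\lambda$ algebraically; both routes produce the same coefficients with comparable effort. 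One small remark: your closing sentence invokes $\cos(y\pi/2)\neq 0$ to justify the asymptotic equivalence, but the uniformity in $y$ is in fact automatic because the ratio $u^\rightarrow(\lambda)\big/\big(\cos(y\sqrt{\pi^2/4-2\lambda})\,\tfrac{\pi}{2\lambda}\big)=\tfrac{2\lambda}{\pi\cosh(z)}$ does not depend on $y$ at all.
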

\begin{proof}
Recalling $$\tan(z) = -\left(z-\frac{\pi}{2}\right)^{-1} + \frac{1}{3} \left(z - \frac{\pi}{2}\right) + O\left( \left(z - \frac{\pi}{2}\right)^3 \right), \quad z \to \frac{\pi}{2},$$ we obtain $$-z \tan(z)
= \frac{\pi}{2} \left(z-\frac{\pi}{2}\right)^{-1} +1 - \frac{\pi}{6} \left(z - \frac{\pi}{2}\right) + O \left(\left(z - \frac{\pi}{2}\right)^2\right), \quad z \to \frac{\pi}{2}.$$ On the other hand, a Taylor expansion yields
\begin{equation}
\label{eqn:sqrttaylor}
\sqrt{\frac{\pi^2}{4}-2\lambda} - \frac{\pi}{2} = - \frac{2}{\pi} \lambda - \frac{4}{\pi^3}\lambda^2 - \frac{16}{\pi^5} \lambda^3 + O(\lambda^4), \quad \lambda \to 0,\ \lambda \in H.
\end{equation}
Using~$zi\tanh(zi) = -z\tan(z)$ for~$z \in \C \setminus \big\{\frac{(2n+1)\pi}{2}: n \in \Z\big\}$ in the second step and the above two expansions in the third, we get
\begin{align*}
v^{\rightarrow}(\lambda)
={}& \sqrt{2 \lambda-\frac{\pi^2}{4}} \tanh\left(\sqrt{2\lambda - \frac{\pi^2}{4}}\right)\\
={}& -\sqrt{\frac{\pi^2}{4}-2 \lambda} \tan\left(\sqrt{\frac{\pi^2}{4}-2\lambda}\right)\\
={}& \frac{\pi}{2} \left(- \frac{2}{\pi} \lambda - \frac{4}{\pi^3}\lambda^2 - \frac{16}{\pi^5} \lambda^3 + O(\lambda^4)\right)^{-1} + 1 - \frac{\pi}{6} \left(- \frac{2}{\pi} \lambda + O(\lambda^2)\right) + O(\lambda^2)\\
={}& -\frac{\pi^2}{4\lambda} \left(1 - \left(-\frac{2}{\pi^2}\lambda - \frac{8}{\pi^4} \lambda^2 + O(\lambda^3)\right)\right)^{-1} + 1 - \frac{\pi}{6} \left(- \frac{2}{\pi} \lambda + O(\lambda^2)\right) + O(\lambda^2)\\
={}& -\frac{\pi^2}{4\lambda} \left(\sum_{n=0}^\infty \left(-\frac{2}{\pi^2}\lambda - \frac{8}{\pi^4} \lambda^2 + O(\lambda^3)\right)^n\right) + 1 +\frac{\pi^2}{3\pi^2} \lambda + O(\lambda^2)\\
={}& -\frac{\pi^2}{4\lambda} \left(1-\frac{2}{\pi^2} \lambda - \frac{8}{\pi^4} \lambda^2 + \frac{4}{\pi^4} \lambda^2 + O(\lambda^3)\right) + 1 +\frac{\pi^2}{3\pi^2} \lambda + O(\lambda^2)\\
={}& -\frac{\pi^2}{4\lambda} + \frac{3}{2} + \frac{3+\pi^2}{3\pi^2} \lambda + O(\lambda^2), \quad \lambda \to 0,\ \lambda \in H.
\end{align*}
We deduce
\begin{align*}
w^\rightarrow(\lambda)
= \frac{(v^\rightarrow(\lambda))^2}{2} -\lambda + \frac{\pi^2}{8}
={}& \frac{1}{2}\left(\frac{\pi^2}{16 \lambda^2} - \frac{3\pi^2}{4 \lambda} + \frac{9}{4} - \frac{3+\pi^2}{6}\right)+ \frac{\pi^2}{8} + O(\lambda)\\
={}& \frac{\pi^4}{32\lambda^2} - \frac{3\pi^2}{8\lambda} +\frac{\pi^2+21}{24} + O(\lambda), \quad \lambda \to 0,\ \lambda \in H.
\end{align*}
Recalling~$\cosh(iz) = \cos(z)$ for each~$z \in \C$ and~$\cos(\frac{\pi}{2}-z) \sim z$ as~$z \to 0$ and using~\eqref{eqn:sqrttaylor}, we finally obtain
\begin{align*}
u^\rightarrow(\lambda) ={}& \frac{\cosh \left(y \sqrt{2 \lambda-\frac{\pi^2}{4}} \right)}{\cosh\left( \sqrt{2\lambda- \frac{\pi^2}{4}}\right)} = \frac{\cos \left(y \sqrt{\frac{\pi^2}{4}-2 \lambda} \right)}{\cos\left( \sqrt{\frac{\pi^2}{4}-2 \lambda}\right)}\\
={}& \frac{\cos \left(y \sqrt{\frac{\pi^2}{4}-2 \lambda} \right)}{\cos\left(\frac{\pi}{2}-\frac{2}{\pi}\lambda + O(\lambda^2) \right)}
\sim \frac{\cos \left(y \sqrt{\frac{\pi^2}{4}-2 \lambda} \right)}{\frac{2}{\pi}{\lambda}}, \quad \lambda \to 0,\ \lambda \in H,
\end{align*}
uniformly in~$y \in (-1,1)$.
\end{proof}

Let us now introduce the objects needed in Ingham's Tauberian theorem (Lemma~\ref{l:tauberianthm}): We define $$S:=S_y: [0,\infty) \to [0,\infty), \quad T \mapsto \e^{\frac{\pi^2}{8} T} R(T)$$ and $$\widehat{S}:=\widehat{S_y}: H \to \C, \quad \lambda \mapsto \lambda \int_0^\infty S(T) \e^{-\lambda T} \D T = \lambda \cdot  (\LL(R))\left(\lambda-\frac{\pi^2}{8}\right).$$ Moreover, we consider the complex wedge $$D:= \left\{\lambda \in H: \abs\op{arg}(\lambda)\abs \lne \frac{\pi}{4}\right\},$$ where~$\op{arg}$ is a branch of the complex argument function taking values in~$[-\pi,\pi]$, as well as the holomorphic function $$V: D \to \C, \quad \lambda \mapsto \left(\frac{\pi^4}{32\lambda^2} - \frac{3\pi^2}{8\lambda}
+ \frac{\pi^2+21}{24}\right)s.$$ We observe~$V(\lambda)
\in \R_{\gne 0}$ for sufficiently small~$\lambda \in \R_{\gne 0}$. More importantly, the asymptotic behavior of the transform~$\widehat{S}$ has the required structure:
\begin{lem}
\label{l:asympShat}
We have $$\widehat{S}(\lambda)=\widehat{S_y}(\lambda) \sim \frac{16}{\pi^2} \lambda u^\rightarrow(\lambda) \e^{V(\lambda)} \quad \text{for real } \lambda \searrow 0,$$ as well as $$\widehat{S}(\lambda)=\widehat{S_y}(\lambda) = O\left(\abs \lambda u^{\rightarrow}(\lambda) \abs \e^{V(\abs \lambda\abs)}\right), \quad \lambda \to 0,\ \lambda \in H,$$ both uniformly in~$y \in [-1,1]$.
\end{lem}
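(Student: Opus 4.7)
The plan is to feed Lemma~\ref{l:laplacetrafosimplified} (at $\mu = \lambda - \pi^2/8$) into the definition of $\widehat{S}$, which yields
$$\widehat{S}(\lambda) = \frac{\lambda u^\rightarrow(\lambda)}{\sqrt{2\pi}\,(\lambda - \pi^2/8)}\, E(\lambda),$$
where $E(\lambda)$ is the bracketed factor in~\eqref{Laplace2} and, crucially, does \emph{not} depend on~$y$. Since $u^\rightarrow$ is the only $y$-dependent quantity, and it appears explicitly on the right-hand side of both claimed estimates, uniformity in $y \in [-1,1]$ will reduce to $y$-free statements about $E(\lambda)$. A further key algebraic observation is the identity $w^\rightarrow(\lambda) s - (v^\rightarrow(\lambda))^2 s/2 = (\pi^2/8 - \lambda) s$, which lets me rewrite the dominant summand of $E(\lambda)$ as
$$\e^{w^\rightarrow(\lambda) s} \int_0^\infty \e^{-(x + v^\rightarrow(\lambda)\sqrt{s})^2/2}\, \D x = \e^{(\pi^2/8 - \lambda) s} \int_0^\infty \e^{-x^2/2 - x v^\rightarrow(\lambda) \sqrt{s}}\, \D x,$$
pulling the potentially huge exponential outside and replacing the complex shift inside the Gaussian by a pure exponential factor. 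This is the form I would work with.

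For the real-axis asymptotic, Lemma~\ref{l:uvwexpansion} gives $v^\rightarrow(\lambda) \to -\infty$ along the positive reals and $w^\rightarrow(\lambda) s = V(\lambda) + O(\lambda)$. Setting $\beta = -v^\rightarrow(\lambda) \sqrt{s} \to \infty$ and completing the square yields
$$\int_0^\infty \e^{-x^2/2 + \beta x}\, \D x = \e^{\beta^2/2} \int_{-\beta}^\infty \e^{-y^2/2}\, \D y \sim \sqrt{2\pi}\, \e^{\beta^2/2}, \quad \beta \to \infty,$$
so the rewritten summand is $\sim \sqrt{2\pi}\, \e^{w^\rightarrow(\lambda) s} \sim \sqrt{2\pi}\, \e^{V(\lambda)}$. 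The remaining summands $\sqrt{2\pi}$ and $(v^\rightarrow(\lambda) - 1) \int_0^s r^{-1/2} \e^{-(\lambda - \pi^2/8) r}\, \D r$ are at worst $O(1/\lambda)$ and so negligible against the super-polynomially growing $\e^{V(\lambda)}$. Hence $E(\lambda) \sim -2\sqrt{2\pi}\, \e^{V(\lambda)}$, and combining with $\lambda/(\lambda - \pi^2/8) \to -8/\pi^2$ gives the stated $\widehat{S}(\lambda) \sim (16/\pi^2)\, \lambda u^\rightarrow(\lambda) \e^{V(\lambda)}$.

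For the uniform $O$-bound, I would parametrize $\lambda = \rho \e^{i\theta}$ with $|\theta| < \pi/2$. Taking the modulus inside the rewritten integral and completing the square against $\op{Re}\, v^\rightarrow(\lambda)$ produces
$$\left| \e^{w^\rightarrow(\lambda) s} \int_0^\infty \e^{-(x + v^\rightarrow(\lambda)\sqrt{s})^2/2}\, \D x \right| \le C\, \e^{\frac{s}{2} (\op{Re}\, v^\rightarrow(\lambda))^2},$$
so the task reduces to verifying $\tfrac{s}{2}(\op{Re}\, v^\rightarrow(\lambda))^2 \le V(|\lambda|) + O(1)$ uniformly for small $\lambda \in H$. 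From Lemma~\ref{l:uvwexpansion}, $\op{Re}\, v^\rightarrow(\lambda) = -\pi^2 \cos\theta/(4\rho) + 3/2 + O(\rho)$, which gives
$$V(|\lambda|) - \tfrac{s}{2}(\op{Re}\, v^\rightarrow(\lambda))^2 = \frac{s\pi^4(1 - \cos^2\theta)}{32\rho^2} - \frac{3s\pi^2(1 - \cos\theta)}{8\rho} + O(1).$$
Using the elementary inequality $0 \le 1 - \cos\theta \le \sin^2\theta$ on $|\theta| < \pi/2$, the right-hand side is bounded below by $(s\pi^2 \sin^2\theta/(32\rho^2))(\pi^2 - 12\rho) + O(1)$, hence by $-C$ once $\rho$ is small. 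The remaining summands in $E(\lambda)$ are $O(1/|\lambda|)$, which is again dominated by $\e^{V(|\lambda|)}$. Combining with boundedness of $(\lambda - \pi^2/8)^{-1}$ near zero, one concludes $|\widehat{S}(\lambda)| \le C\, |\lambda u^\rightarrow(\lambda)|\, \e^{V(|\lambda|)}$, uniformly in $y$.

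The genuinely delicate point is this last wedge estimate: one might worry that, for $\theta$ near $\pm\pi/2$, the $1/\rho$ loss $3s\pi^2(1-\cos\theta)/(8\rho)$ could catch up with the $1/\rho^2$ gain $s\pi^4\sin^2\theta/(32\rho^2)$. The saving point is that both are controlled by the same factor $\sin^2\theta$ (via $1-\cos\theta \le \sin^2\theta$), so the $1/\rho^2$ term dominates uniformly in $\theta$ once $\rho$ is sufficiently small. Everything else is essentially routine complex-analytic bookkeeping.
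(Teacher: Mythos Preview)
Your proof is correct and follows essentially the same route as the paper: both start from Lemma~\ref{l:laplacetrafosimplified}, isolate the $y$-dependence in $u^\rightarrow$, identify the dominant summand as $\sim\sqrt{2\pi}\,\e^{w^\rightarrow(\lambda)s}\sim\sqrt{2\pi}\,\e^{V(\lambda)}$ along the real axis, and for the $O$-bound reduce to $\tfrac{s}{2}(\op{Re} v^\rightarrow(\lambda))^2 \le V(|\lambda|)+O(1)$. The only difference is cosmetic: the paper obtains this last inequality by a direct comparison of the quadratic $x\mapsto \tfrac{\pi^4}{32}x^2-\tfrac{3\pi^2}{8}x$ at $x=\op{Re}(1/\lambda)$ and $x=|1/\lambda|$ (the possible defect being a bounded constant absorbed into the $O(1)$), whereas you parametrize $\lambda=\rho\e^{i\theta}$ and invoke $1-\cos\theta\le\sin^2\theta$ to make the uniformity in $\theta$ explicit.
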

\begin{proof}
Regarding the uniformity, we note that only~$u^\rightarrow$ (implicitly) depends on~$y$. We start by observing $$\left\abs \int_0^s \frac{1}{\sqrt{r}} \e^{-\left(\lambda-\frac{\pi^2}{8}\right) r} \D r\right\abs \le \e^{\abs\lambda \abs s} \int_0^s \frac{1}{\sqrt{r}} \e^{\frac{\pi^2}{8}r} \D r = O(1), \quad \lambda \to 0,\ \lambda \in H.$$
Together with Lemmas~\ref{l:laplacetrafosimplified} and~\ref{l:uvwexpansion}, we get
\begin{align}
\label{eqn:asympShat}
\widehat{S}(\lambda) ={}& \frac{\lambda u^{\rightarrow}(\lambda)}{\sqrt{2\pi}\left(\lambda - \frac{\pi^2}{8}\right)}\left(\sqrt{2\pi} - 2 \e^{w^{\rightarrow}(\lambda) s} \int_0^\infty \e^{-\frac{(x+v^{\rightarrow}(\lambda) \sqrt{s})^2}{2}} \D x + (v^{\rightarrow}(\lambda)-1) \int_0^s \frac{1}{\sqrt{r}} \e^{-\left(\lambda-\frac{\pi^2}{8}\right) r} \D r\right)\notag\\
={}& \frac{16 \lambda u^{\rightarrow}(\lambda)}{\pi^2\sqrt{2\pi}}(1+o(1)) \left(\e^{w^{\rightarrow}(\lambda) s} \int_0^\infty \e^{-\frac{(x+v^{\rightarrow}(\lambda) \sqrt{s})^2}{2}} \D x + O\left(\frac{1}{\lambda}\right)\right), \quad \lambda \to 0,\ \lambda \in H,
\end{align}
uniformly in~$y \in [-1,1]$. Since Lemma~\ref{l:uvwexpansion} implies $\lim_{\lambda \searrow 0} v^{\rightarrow}(\lambda)\sqrt{s} = -\infty$, we can deduce
\begin{align*}
\widehat{S}(\lambda)
={}& \frac{16}{\pi^2} \lambda u^\rightarrow(\lambda) (1+o(1)) \left(\e^{w^{\rightarrow}(\lambda) s} \int_{v^{\rightarrow}(\lambda)\sqrt{s}}^\infty \frac{1}{\sqrt{2\pi}}\e^{-\frac{x^2}{2}} \D x + O\left(\frac{1}{\lambda}\right)\right)\\
\sim{}& \frac{16}{\pi^2} \lambda u^\rightarrow(\lambda) \e^{V(\lambda)}, \quad \text{for real } \lambda \searrow 0,
\end{align*}
uniformly in~$y \in [-1,1]$. Since Lemma~\ref{l:uvwexpansion} also implies
\begin{align*}
\frac{(\op{Re} v^\rightarrow(\lambda))^2}{2}
={}& \frac{1}{2} \left(- \frac{\pi^2}{4} \op{Re}\frac{1}{\lambda} + \frac{3}{2} + O(\abs \lambda\abs) \right)^2
= \frac{\pi^4}{32}\left(\op{Re}\frac{1}{\lambda}\right)^2 - \frac{3 \pi^2}{8} \op{Re}\frac{1}{\lambda} + O(1)\\
\le {}& \frac{\pi^4}{32}\left\abs \frac{1}{\lambda}\right\abs^2 - \frac{3 \pi^2}{8} \left\abs\frac{1}{\lambda}\right\abs + O(1)
= \frac{V(\abs \lambda\abs)}{s} + O(1), \quad \lambda \to 0,\ \lambda \in H,
\end{align*}
we obtain
\begin{align*}
\left\abs \e^{w^{\rightarrow}(\lambda) s} \int_0^\infty \e^{-\frac{(x+v^{\rightarrow}(\lambda) \sqrt{s})^2}{2}} \D x \right\abs
\le{}& \int_0^\infty \left\abs \e^{-\frac{x^2}{2} - x v^{\rightarrow}(\lambda) \sqrt{s} - \left(\lambda-\frac{\pi^2}{8}\right) s}\right\abs \D x \\
={}& \int_0^\infty \e^{-\frac{x^2}{2} - x \op{Re} v^{\rightarrow}(\lambda) \sqrt{s} - \op{Re} \lambda s + \frac{\pi^2}{8} s} \D x \\
={}& \e^{\frac{(\op{Re} v^{\rightarrow}(\lambda))^2}{2}s} \e^{- \op{Re} \lambda s + \frac{\pi^2}{8} s} \int_0^\infty \e^{-\frac{(x+\op{Re}v^{\rightarrow}(\lambda) \sqrt{s})^2}{2}} \D x\\
={}& O\big(\e^{ V(\abs\lambda\abs)}\big) O(1), \quad \lambda \to 0,\ \lambda \in H.
\end{align*}
Plugging this into~\eqref{eqn:asympShat}, we deduce
\begin{align*}
\big\abs \widehat{S}(\lambda)\big\abs
={}& \frac{16 \abs \lambda u^{\rightarrow}(\lambda) \abs}{\pi^2\sqrt{2\pi}} (1+o(1)) \left\abs \e^{w^{\rightarrow}(\lambda) s} \int_0^\infty \e^{-\frac{(x+v^{\rightarrow}(\lambda) \sqrt{s})^2}{2}} \D x + O\left(\frac{1}{\lambda}\right) \right\abs\\
={}& O\big(\abs \lambda u^{\rightarrow}(\lambda) \abs\e^{V(\abs \lambda\abs)}\big), \quad \lambda \to 0,\ \lambda \in H,
\end{align*}
uniformly in~$y \in [-1,1]$.
\end{proof}

\subsection{Inversion when starting in the qsd} \label{sec:inversionqsd}
As already noted in Section~\ref{sec:asympprelim}, it seems unclear how to prove monotonicity of~$S_y$, which is required to apply Ingham's Tauberian theorem (Lemma~\ref{l:tauberianthm}), for any deterministic starting point~$y \in \R$. For a specific starting distribution~$\nu$, however, the monotonicity of the corresponding function is rather easy to check. This allows us to rigorously derive the asymptotic behavior of~$\p_\nu(\Gamma_T \in (0,s])$ as~$T \to \infty$ in a first step. Let $$\tau:= \inf\{t \ge 0: \abs B_t\abs = 1\}$$ be the first exit time of~$B$ from the interval~$(-1,1)$. We recall that there is a unique distribution~$\nu$ supported on~$(-1,1)$ which satisfies
\begin{align}
\label{eqn:qsd}
\mathbb{P}_{\nu}(\tau \geq t, B_{t}\in \D y)= \e^{-\frac{\pi^2}{8} t} \D\nu(y), \quad y \in (-1,1),\ t\geq 0.
\end{align}
This distribution is called quasi-stationary distribution~(qsd) in the literature. It has a Lebesgue density given by
\begin{align}
\label{eqn:qsddensity}
\D\nu(y)= \frac{\pi}{4} \cos\left(\frac{\pi y}{2}\right) \D y, \quad y \in (-1,1).
\end{align}
Note that the dependence on~$y$ is the same as in Proposition~\ref{prop:asymp}. This connection will become clear in Section~\ref{sec:startingpointsgeneral}. For more information and results concerning quasi-stationary distributions, we refer to~\cite{SE07}, \cite{KS12} and~\cite{CV16}. 
The monotonicity of $$S_\nu:[0,\infty) \to  [0,\infty), \quad T\mapsto \e^{\frac{\pi^2}{8}T}\mathbb{P}_{\nu}(\Gamma_T \in (0,s])$$ now follows straight from the Markov property and~\eqref{eqn:qsd}: For all~$T,t\ge 0$, we get 
\begin{align*}
\mathbb{P}_{\nu}(\Gamma_{T+t} \in (0,s])
\geq \mathbb{P}_{\nu}(\Gamma_{T+t}\in (0,s],\tau\ge t)
= \mathbb{E}_{\nu}\left(\1_{\{ \tau\ge t\}}\mathbb{P}_{B_t}(\Gamma_T\in (0,s])\right)
= \e^{-\frac{\pi^2}{8} t}\mathbb{P}_{\nu}(\Gamma_T \in (0,s]).
\end{align*}
The function~$V$ introduced before Lemma~\ref{l:asympShat} also satisfies the technical assumptions of Ingham's Tauberian theorem (Lemma~\ref{l:tauberianthm}):
\begin{lem}
\label{l:Vcond}
The function~$V$ satisfies the conditions stated in~\eqref{eqn:taubcond1} and~\eqref{eqn:taubcond2}.
\end{lem}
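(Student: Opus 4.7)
The plan is to verify each sub-condition by direct differentiation of the explicit rational expression
\[V(\lambda) = s\left(\frac{\pi^4}{32\lambda^2} - \frac{3\pi^2}{8\lambda} + \frac{\pi^2+21}{24}\right).\]
First I would compute
\[V'(\lambda) = \frac{s\pi^2(6\lambda-\pi^2)}{16\lambda^3}, \qquad V''(\lambda) = \frac{3s\pi^2(\pi^2-4\lambda)}{16\lambda^4},\]
whose leading behaviors as $\lambda\searrow 0$ are $V'(\lambda)\sim -\frac{s\pi^4}{16\lambda^3}$ and $V''(\lambda)\sim \frac{3s\pi^4}{16\lambda^4}$. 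Since $D=\{\lambda\in H:\abs\op{arg}(\lambda)\abs\lne\pi/4\}$ is an open wedge containing the positive real axis, for real $\lambda\gne 0$ the distance to $D^c$ is exactly $d(\lambda)=\lambda\sin(\pi/4)=\lambda/\sqrt{2}$, so $d(\lambda)/\lambda$ is the positive constant $1/\sqrt{2}$.

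For \eqref{eqn:taubcond1} I would check the three statements separately. A direct calculation yields
\[\tfrac{\D}{\D\lambda}\bigl(\lambda V'(\lambda)\bigr) = V'(\lambda)+\lambda V''(\lambda) = \frac{s\pi^2(\pi^2-3\lambda)}{8\lambda^3},\]
which is positive on $(0,\pi^2/3)$, so $\lambda V'(\lambda)$ is non-decreasing in $\lambda$; combined with $\lambda V'(\lambda)\sim -\frac{s\pi^4}{16\lambda^2}\to -\infty$, this establishes the first monotone convergence. For the second, the leading asymptotics immediately give
\[\frac{\sqrt{V''(\lambda)}}{\abs V'(\lambda)\abs} \sim \frac{\sqrt{3s}\,\pi^2/4 \cdot \lambda^{-2}}{s\pi^4/16 \cdot \lambda^{-3}} = \frac{4\sqrt{3}}{\sqrt{s}\,\pi^2}\,\lambda = o(1) = o(d(\lambda)/\lambda).\]
For the third I would choose $k=4$: since $\lambda^4 V'(\lambda) = \frac{s\pi^2}{16}(6\lambda^2-\pi^2\lambda)$ has derivative $\frac{s\pi^2}{16}(12\lambda-\pi^2)\lne 0$ on $(0,\pi^2/12)$, $\lambda^4 V'(\lambda)$ decreases in $\lambda$, hence increases as $\lambda\searrow 0$, with limit $0$.

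For the first condition in \eqref{eqn:taubcond2}, observe that $\abs z\abs\lne d(\lambda)=\lambda/\sqrt{2}$ forces $\abs\lambda+z\abs\in\bigl[\lambda(1-1/\sqrt{2}),\,\lambda(1+1/\sqrt{2})\bigr]$, so plugging into the explicit formula for $V''$ gives
\[\sup_{\abs z\abs\lne d(\lambda)}\abs V''(\lambda+z)\abs \le \frac{3s\pi^2\bigl(\pi^2+4\lambda(1+1/\sqrt{2})\bigr)}{16\lambda^4(1-1/\sqrt{2})^4},\]
and dividing by $V''(\lambda)\sim\frac{3s\pi^4}{16\lambda^4}$ yields a quantity bounded by $(1-1/\sqrt{2})^{-4}+o(1)$, proving $O(V''(\lambda))$. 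The second condition of \eqref{eqn:taubcond2} concerns $U$ rather than $V$ and is therefore outside the scope of this lemma.

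There is no substantial obstacle: the key observation is simply that $V$ is a Laurent polynomial whose leading singular term at $0$ is of order $\lambda^{-2}$, so $V'$ and $V''$ behave like pure negative powers near the origin, and the wedge $D$ has positive opening angle, giving $d(\lambda)\asymp\lambda$ on the real axis; every estimate then reduces to comparing powers of $\lambda$.
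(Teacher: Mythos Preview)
Your proof is correct and follows essentially the same approach as the paper's: both compute $V'$ and $V''$ explicitly, use $d(\lambda)=\lambda/\sqrt{2}$ on the real axis, verify the three parts of~\eqref{eqn:taubcond1} by inspection of the leading powers (the paper uses $k=4$ implicitly as well), and bound $\abs V''(\lambda+z)\abs/V''(\lambda)$ via $\abs\lambda+z\abs\ge\lambda(1-1/\sqrt{2})$. Your version is slightly more explicit in computing the derivatives of $\lambda V'(\lambda)$ and $\lambda^4 V'(\lambda)$, and your remark that the $U$-condition in~\eqref{eqn:taubcond2} is outside the scope of the lemma matches the paper's treatment.
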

\begin{proof}
The derivatives of~$V$ are given by $$V'(\lambda) = \left(-\frac{\pi^4}{16 \lambda^3} + \frac{3\pi^2}{8\lambda^2} \right)s  \quad \text{and} \quad V''(\lambda) = \left(\frac{3\pi^4}{16 \lambda^4} - \frac{3 \pi^2}{4\lambda^3} \right)s, \qquad \lambda \in D.$$ In particular, we have~$\lambda V'(\lambda) \searrow -\infty$ and~$\lambda^4 V'(\lambda) \nearrow 0$ for real~$\lambda \searrow 0$. Noting that the distance of~$\lambda$ to~$D^c$ is given by~$d(\lambda) = \frac{\lambda}{\sqrt{2}}$ for~$\lambda \in D \cap \R_{\gne 0}$, we get $$\frac{\sqrt{V''(\lambda)}}{\abs V'(\lambda)\abs} = o(1) = o\left(\frac{d(\lambda)}{\lambda}\right), \quad \text{for real } \lambda \searrow 0.$$ For every sufficiently small~$\lambda \in \R_{\gne 0}$ and each~$z \in \C$ with~$\abs z\abs \lne d(\lambda)$, we obtain 
\begin{align*}
\frac{\abs V''(\lambda + z)\abs }{V''(\lambda)} = \frac{\lambda^4}{\abs\lambda+z\abs^4} \cdot \left\abs\frac{\frac{3\pi^4}{16} - \frac{3}{4} \pi^2 (\lambda+z)}{\frac{3\pi^4}{16} - \frac{3}{4} \pi^2 \lambda}\right\abs \le \left(\frac{1}{1-\frac{1}{\sqrt{2}}}\right)^4\cdot 2,
\end{align*}
verifying the conditions stated in~\eqref{eqn:taubcond2}.
\end{proof}

As seen in Remark~\ref{rem:taub}(a), there are~$\eps,T_0 \gne 0$ such that~$-V'\abs_{(0,\eps)}$ has an inverse~$h: (T_0,\infty) \to (0,\eps)$. This function can be determined explicitly and behaves as follows:
\begin{lem}
\label{l:asymph}
The function~$h$ satisfies $$h(T) \sim \left(\frac{\pi^4 s}{2^4 T}\right)^{\frac{1}{3}} \quad \text{and} \quad h(T) \sqrt{2 \pi V''(h(T))} \sim \frac{\sqrt{3} \pi^{\frac{7}{6}}}{2^{\frac{1}{6}}} s^{\frac{1}{6}} T^{\frac{1}{3}}, \qquad T \to \infty,$$ as well as $$T h(T) + V(h(T)) = \frac{3}{2^{\frac{7}{3}}}\pi^{\frac{4}{3}} s^{\frac{1}{3}} T^{\frac{2}{3}} - \frac{3}{2^{\frac{5}{3}}} \pi^{\frac{2}{3}} s^{\frac{2}{3}} T^{\frac{1}{3}} + \frac{\pi^2+12}{24}s + O\left(\frac{1}{T^{\frac{1}{3}}}\right), \qquad T \to \infty.$$
\end{lem}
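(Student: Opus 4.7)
The plan is to explicitly invert $-V'$ via an iterative expansion and substitute the result into the three asymptotic targets. Computing derivatives of $V$ yields
$$V'(\lambda) = s\left(-\frac{\pi^4}{16\lambda^3} + \frac{3\pi^2}{8\lambda^2}\right), \qquad V''(\lambda) = s\left(\frac{3\pi^4}{16\lambda^4} - \frac{3\pi^2}{4\lambda^3}\right),$$
so that, with $\mu := 1/h(T)$, the defining equation $-V'(h(T)) = T$ becomes the cubic $\frac{\pi^4 s}{16}\mu^3 - \frac{3\pi^2 s}{8}\mu^2 = T$. Setting $\mu_0 := (16T/(\pi^4 s))^{1/3}$, this is a small perturbation of $\mu = \mu_0$, and the first claim $h(T) \sim 1/\mu_0 = (\pi^4 s/(2^4 T))^{1/3}$ is immediate. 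For the second claim, at $\lambda = h(T)$ the leading behaviour is $V''(h(T)) \sim \frac{3\pi^4 s}{16 h(T)^4}$, so $h(T)\sqrt{2\pi V''(h(T))} \sim \sqrt{6\pi^5 s/16}/h(T) \sim \sqrt{6\pi^5 s/16}\cdot \mu_0$, which upon inserting $\mu_0$ simplifies to the stated expression $\sqrt{3}\pi^{7/6}s^{1/6}T^{1/3}/2^{1/6}$.

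The third claim is the delicate one since it requires the precise constant term. Writing $\mu = \mu_0 + z$ and expanding the cubic to first order in $z/\mu_0$ yields $z = 2/\pi^2 + O(\mu_0^{-1})$; a further iteration produces
$$\mu = \mu_0 + \frac{2}{\pi^2} + \frac{4}{\pi^4 \mu_0} + O(\mu_0^{-2}), \qquad h = \frac{1}{\mu} = \frac{1}{\mu_0} - \frac{2}{\pi^2 \mu_0^2} + O(\mu_0^{-4}),$$
where a fortunate cancellation kills the $\mu_0^{-3}$-contribution to $h$. Using $T = \frac{\pi^4 s}{16}\mu_0^3$, one gets $Th = \frac{\pi^4 s}{16}\mu_0^2 - \frac{\pi^2 s}{8}\mu_0 + O(\mu_0^{-1})$, while substituting the expansion of $\mu$ into $V(h) = s\bigl(\frac{\pi^4}{32}\mu^2 - \frac{3\pi^2}{8}\mu + \frac{\pi^2+21}{24}\bigr)$ yields an expansion through constant order. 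Collecting: the $\mu_0^2$- and $\mu_0^1$-coefficients sum to $\frac{3\pi^4 s}{32}$ and $-\frac{3\pi^2 s}{8}$ respectively, which upon re-expressing $\mu_0$ in terms of $T$ give exactly the claimed $T^{2/3}$- and $T^{1/3}$-terms. The constant terms combine as
$$\frac{3s}{8} - \frac{3s}{4} + \frac{\pi^2+21}{24}s = \frac{\pi^2+12}{24}s,$$
matching the stated $s$-coefficient, with a remainder of order $\mu_0^{-1} = O(T^{-1/3})$.

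The main obstacle is the bookkeeping for the constant term: partially cancelling $O(1)$ contributions arise from $Th$, from $\frac{\pi^4 s}{32 h^2}$, and from $-\frac{3\pi^2 s}{8 h}$, and only knowing $\mu$ through order $\mu_0^{-1}$ captures all of them simultaneously. As a sanity check, one may use the Legendre-type identity $\frac{\D}{\D T}(Th(T)+V(h(T))) = h(T)$ (an immediate consequence of $T = -V'(h(T))$ and the chain rule): term-by-term integration of the asymptotic expansion of $h(T)$ reproduces the $T^{2/3}$- and $T^{1/3}$-coefficients, leaving only the integration constant to be pinned down by the detailed expansion above.
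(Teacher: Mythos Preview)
Your proof is correct but takes a different route from the paper. The paper solves the cubic in $h$ directly via Cardano's formula, obtaining the closed expression
\[
h(T)=\Bigl(\tfrac{\pi^4 s}{2^5 T}\Bigr)^{1/3}\Bigl(\bigl(\sqrt{1+\tfrac{2s}{\pi^2 T}}+1\bigr)^{1/3}-\bigl(\sqrt{1+\tfrac{2s}{\pi^2 T}}-1\bigr)^{1/3}\Bigr),
\]
and then Taylor-expands this and an auxiliary function $H(z)$ separately to extract the contributions to $Th(T)$ and $V(h(T))$. Your approach instead passes to the reciprocal variable $\mu=1/h$, which turns the defining relation into the depressed cubic $\mu^3-\tfrac{6}{\pi^2}\mu^2=\mu_0^3$, and then iterates perturbatively around $\mu_0$. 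Since $V(h)$ is a polynomial in $\mu$, this substitution lets you read off all terms by straightforward multiplication; the cancellation you flag at order $\mu_0^{-3}$ in $h$ is exactly what guarantees that $Th$ contributes nothing at constant order. The paper's exact formula for $h$ is in principle more informative, but for the asymptotic statement your iterative expansion is arguably more transparent and avoids the Cardano machinery. Your Legendre-type check $\frac{\D}{\D T}\bigl(Th(T)+V(h(T))\bigr)=h(T)$ is a nice cross-verification that the paper does not include.
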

\begin{proof}
Recalling $$V'(\lambda) = \left(-\frac{\pi^4}{2^4 \lambda^3} + \frac{3\pi^2}{2^3\lambda^2} \right)s, \quad \lambda \gne 0,$$ the function~$h$ must satisfy $$\frac{T}{s} (h(T))^3 = \frac{\pi^4}{2^4} - \frac{3 \pi^2}{2^3}  h(T), \quad T \in (T_0,\infty),$$ so that Cardano's formula yields
\begin{align*}
h(T)
={}& \left(\frac{\pi^4s}{2^5 T} + \sqrt{\frac{\pi^8 s^2}{2^{10} T^2}+ \frac{\pi^6 s^3}{2^9 T^3}}\right)^{\frac{1}{3}} + \left(\frac{\pi^4s}{2^{10} T} - \sqrt{\frac{\pi^8s^2}{2^{10} T^2}+ \frac{\pi^6 s^3}{2^9 T^3}}\right)^{\frac{1}{3}}\\
={}& \left(\frac{\pi^4s}{2^5 T}\right)^{\frac{1}{3}} \left(\left(\sqrt{1+ \frac{2 s}{\pi^2 T}} +1\right)^{\frac{1}{3}} - \left(\sqrt{1+ \frac{2 s}{\pi^2 T}}-1\right)^{\frac{1}{3}}\right), \quad T \in (T_0,\infty).
\end{align*}
This implies~$h(T) \sim \big(\frac{\pi^4 s}{2^4 T}\big)^{\frac{1}{3}}$ as~$T \to \infty$. We deduce $$h(T) \sqrt{2 \pi V''(h(T))} = \sqrt{2 \pi \left(\frac{3\pi^4}{2^4 (h(T))^2} - \frac{3 \pi^2}{4 h(T)}\right)s} \sim \sqrt{\frac{3\pi^5s}{2^3} \left(\frac{2^4 T}{\pi^4 s}\right)^{\frac{2}{3}}} = \frac{\sqrt{3} \pi^{\frac{7}{6}}}{2^{\frac{1}{6}}} s^{\frac{1}{6}} T^{\frac{1}{3}}, \quad T \to \infty.$$
Observing
\begin{align}
\label{eqn:asymphsqrt}
\left(\sqrt{1+z}-1\right)^{\frac{1}{3}}= z^{\frac{1}{3}}\cdot \frac{1}{\big(\sqrt{1+z}+1\big)^{\frac{1}{3}}} = z^{\frac{1}{3}} \left(\frac{1}{2^{\frac{1}{3}}} +O(z)\right), \quad z \to 0,
\end{align}
we obtain
\begin{align*}
T h(T)
={}& \left(\frac{\pi^4s T^2}{2^5}\right)^{\frac{1}{3}} \left(\left(\sqrt{1+ \frac{2 s}{\pi^2 T}} +1\right)^{\frac{1}{3}} - \left(\sqrt{1+ \frac{2 s}{\pi^2 T}}-1\right)^{\frac{1}{3}}\right)\\
={}& \left(\frac{\pi^4s T^2}{2^5}\right)^{\frac{1}{3}} \left(\left(2^{\frac{1}{3}}+O\left(\frac{1}{T}\right)\right) - \left(\frac{2 s}{\pi^2 T}\right)^{\frac{1}{3}} \left(\frac{1}{2^{\frac{1}{3}}} +O\left(\frac{1}{T}\right)\right)\right)\\
={}& \frac{\pi^{\frac{4}{3}}}{2^{\frac{4}{3}}} s^{\frac{1}{3}} T^{\frac{2}{3}} - \frac{\pi^{\frac{2}{3}}}{2^{\frac{5}{3}}} s^{\frac{2}{3}} T^{\frac{1}{3}} + O\left(\frac{1}{T^{\frac{1}{3}}}\right), \quad T \to \infty.
\end{align*}
Similarly, \eqref{eqn:asymphsqrt} yields
\begin{align*}
H(z):={}&\frac{1}{\big(\sqrt{1+z}+1\big)^{\frac{1}{3}}-\big(\sqrt{1+z}-1\big)^{\frac{1}{3}}}
= \frac{1}{2}\left(\big(\sqrt{1+z}+1\big)^{\frac{2}{3}}+z^{\frac{1}{3}}+\big(\sqrt{1+z}-1\big)^{\frac{2}{3}}\right)\\
={}& \frac{1}{2}\left(\left(2^{\frac{2}{3}}+O(z)\right)+ z^{\frac{1}{3}}+z^{\frac{2}{3}} \left(\frac{1}{2^{\frac{2}{3}}} + O(z) \right)\right)
= \frac{1}{2^{\frac{1}{3}}} + \frac{1}{2} z^{\frac{1}{3}} + \frac{1}{2^{\frac{5}{3}}} z^{\frac{2}{3}} + O(z), \quad z \to 0,
\end{align*}
which implies
\begin{align*}
V(h(T))
={}& \frac{\pi^4 s}{2^5} \left(\frac{2^5 T}{\pi^4 s}\right)^{\frac{2}{3}} \left(H\left(\frac{2 s}{\pi^2 T}\right)\right)^2 - \frac{3 \pi^2 s}{8} \left(\frac{2^5 T}{\pi^4 s}\right)^{\frac{1}{3}} H\left(\frac{2 s}{\pi^2 T}\right) + \frac{\pi^2+21}{24}s\\
={}& \frac{\pi^4 s}{2^5} \left(\frac{2^5 T}{\pi^4 s}\right)^{\frac{2}{3}} \left(\frac{1}{2^{\frac{2}{3}}} + \frac{1}{2^{\frac{1}{3}}} \left(\frac{2 s}{\pi^2 T}\right)^{\frac{1}{3}} + \left(\frac{1}{2^2}+\frac{1}{2}\right) \left(\frac{2 s}{\pi^2 T}\right)^{\frac{2}{3}} + O\left(\frac{1}{T}\right)\right)\\
{}&\quad - \frac{3 \pi^2 s}{2^3} \left(\frac{2^5 T}{\pi^4 s}\right)^{\frac{1}{3}} \left(\frac{1}{2^{\frac{1}{3}}} + \frac{1}{2} \left(\frac{2 s}{\pi^2 T}\right)^{\frac{1}{3}} + O\left(\frac{1}{T^{\frac{2}{3}}}\right)\right) + \frac{\pi^2+21}{24}s\\
={}& \frac{\pi^{\frac{4}{3}}}{2^{\frac{7}{3}}}s^{\frac{1}{3}} T^{\frac{2}{3}} - \frac{2 \pi^{\frac{2}{3}}}{2^{\frac{5}{3}}} s^{\frac{2}{3}} T^{\frac{1}{3}} + \frac{\pi^2+12}{24}s + O\left(\frac{1}{T^{\frac{1}{3}}}\right) , \quad T \to \infty.
\end{align*}
The claim follows by adding the expansions of~$Th(T)$ and~$V(h(T))$.
\end{proof}

Integrating in Lemma~\ref{l:asympShat} w.r.t.~$\nu$, we can finally apply Ingham's Tauberian theorem (Lemma~\ref{l:tauberianthm}) to obtain the asymptotic behavior of~$S_\nu$:
\begin{prop}
\label{prop:asympSnu}
We have $$S_\nu(T) \sim \frac{ 2^{\frac{7}{6}}}{\sqrt{3} \pi^{\frac{7}{6}} s^{\frac{1}{6}} T^{\frac{1}{3}}} \exp\left( \frac{3}{2^{\frac{7}{3}}} \pi^{\frac{4}{3}} s^{\frac{1}{3}} T^{\frac{2}{3}} - \frac{3}{2^{\frac{5}{3}}} \pi^{\frac{2}{3}} s^{\frac{2}{3}} T^{\frac{1}{3}} +\frac{\pi^2+12}{24}s \right), \quad T \to \infty.$$
\end{prop}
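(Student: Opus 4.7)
The plan is to apply Lemma~\ref{l:tauberianthm} directly to $S_\nu$. The monotonicity of $S_\nu$ has already been established via the Markov property and the qsd identity~\eqref{eqn:qsd}, Lemma~\ref{l:Vcond} verifies the analytic conditions on $V$, Lemma~\ref{l:asymph} supplies the expansions of $h(T)$ and $Th(T)+V(h(T))$, and Lemma~\ref{l:asympShat} gives the Laplace-transform asymptotics uniformly in $y\in[-1,1]$. The only missing ingredient is the identification of the function $U$ appearing in Ingham's theorem.

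To produce $U$, I would integrate Lemma~\ref{l:asympShat} against $\nu$. Since $\widehat{S_\nu}(\lambda) = \int_{-1}^1 \widehat{S_y}(\lambda)\,\nu(\D y)$ by Fubini, the uniformity in $y$ yields
$$\widehat{S_\nu}(\lambda) \sim \frac{16}{\pi^2}\,\lambda\,\e^{V(\lambda)}\int_{-1}^1 u^\rightarrow(\lambda)\,\nu(\D y), \quad \text{for real } \lambda \searrow 0,$$
and analogously $\widehat{S_\nu}(\lambda) = O\bigl(\abs\lambda\abs\,\e^{V(\abs\lambda\abs)}\int_{-1}^1 \abs u^\rightarrow(\lambda)\abs\,\nu(\D y)\bigr)$ as $\lambda \to 0$ in~$H$. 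The crux is the closed-form evaluation of the $\nu$-integral of $u^\rightarrow(\lambda)$. Writing $\mu := \sqrt{2\lambda - \pi^2/4}$ and exploiting the antiderivative
$$\int \cosh(\mu y)\cos\left(\tfrac{\pi y}{2}\right) \D y = \frac{\mu\sinh(\mu y)\cos\left(\tfrac{\pi y}{2}\right) + \tfrac{\pi}{2}\cosh(\mu y)\sin\left(\tfrac{\pi y}{2}\right)}{\mu^2+\pi^2/4},$$
the boundary terms at $y = \pm 1$ conspire so that $\int_{-1}^1 \cosh(\mu y)\cos(\tfrac{\pi y}{2})\,\D y = \pi\cosh(\mu)/(\mu^2+\pi^2/4)$. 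Combined with $\nu(\D y) = \tfrac{\pi}{4}\cos(\tfrac{\pi y}{2})\D y$ and the algebraic simplification $\mu^2 + \pi^2/4 = 2\lambda$, this produces the \emph{exact} identity $\int_{-1}^1 u^\rightarrow(\lambda)\,\nu(\D y) = \pi^2/(8\lambda)$. Consequently $\widehat{S_\nu}(\lambda) \sim 2\,\e^{V(\lambda)}$, and the natural choice is $U \equiv 2$, which makes the $U$-conditions in~\eqref{eqn:taubcond2} trivial. The same observation -- essentially $\abs\cos(\sqrt{\pi^2/4-2\lambda})\abs$ is of order $\abs\lambda\abs$, so $\abs\lambda u^\rightarrow(\lambda)\abs = O(1)$ uniformly in $y\in[-1,1]$ -- yields the required $O$-bound $\widehat{S_\nu}(\lambda) = O(\e^{V(\abs\lambda\abs)})$.

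With $U\equiv 2$, Lemma~\ref{l:tauberianthm} applies and gives
$$S_\nu(T) \sim \frac{2\,\e^{T h(T) + V(h(T))}}{h(T)\sqrt{2\pi V''(h(T))}}, \quad T\to\infty.$$
Substituting the denominator asymptotic $h(T)\sqrt{2\pi V''(h(T))} \sim 2^{-1/6}\sqrt{3}\,\pi^{7/6}s^{1/6}T^{1/3}$ and the three-term expansion of $Th(T) + V(h(T))$ from Lemma~\ref{l:asymph}, and noting $2 \cdot 2^{1/6} = 2^{7/6}$, yields the stated asymptotic. The step I anticipated as the main obstacle -- verifying the $U$-conditions of Ingham's theorem for a genuinely variable prefactor $u^\rightarrow$ -- evaporates thanks to the exact cancellation $\int u^\rightarrow\,\D\nu = \pi^2/(8\lambda)$, which is ultimately the reason the qsd is the ``right'' initial distribution to feed into the Tauberian machinery.
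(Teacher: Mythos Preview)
Your proof is correct and follows the same overall strategy as the paper: integrate the uniform asymptotics of Lemma~\ref{l:asympShat} against $\nu$ to arrive at $U\equiv 2$, then invoke Ingham's theorem and Lemma~\ref{l:asymph}. The one pleasant difference is that you evaluate $\int_{-1}^1 u^\rightarrow(\lambda)\,\D\nu(y)=\pi^2/(8\lambda)$ \emph{exactly} via the antiderivative, whereas the paper first replaces $u^\rightarrow$ by its asymptotic form $\cos\bigl(y\sqrt{\pi^2/4-2\lambda}\bigr)\cdot\frac{\pi}{2\lambda}$ from Lemma~\ref{l:uvwexpansion} and then passes to the limit via dominated convergence; your shortcut is cleaner and explains more transparently why the qsd is the distinguished initial law.
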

\begin{proof}
Equation~\eqref{eqn:qsddensity} implies $\int_{-1}^1 \frac{8}{\pi} \cos\left(\frac{\pi y}{2}\right) \D \nu(y) = 2$. The constant function $U: D \to (0,\infty)$,~$\lambda \mapsto 2$ trivially satisfies all requirements of Lemma~\ref{l:tauberianthm}.
We define $$\widehat{S_\nu}:H \to \C, \quad \lambda \mapsto \lambda \int_0^\infty S_\nu(T) \e^{-\lambda T} \D T.$$
Fubini's theorem yields $$\widehat{S_\nu}(\lambda) = \lambda \int_0^\infty \int_{-1}^1 S_y(T) \e^{-\lambda T} \D \nu(y) \D T = \lambda \int_{-1}^1 \int_0^\infty S_y(T) \e^{-\lambda T} \D T \D \nu(y) =  \int_{-1}^1 \widehat{S_y}(\lambda) \D \nu(y), \quad \lambda \in H.$$ 
Using the uniformity in Lemma~\ref{l:asympShat} and in the last assertion of Lemma~\ref{l:uvwexpansion} in the first step as well as the dominated convergence theorem in the second, we obtain
\begin{align*}
\int_{-1}^1 \widehat{S_y}(\lambda) \D \nu(y)
\sim{}& \int_{-1}^1 \frac{8}{\pi} \cos \left(y \sqrt{\frac{\pi^2}{4}-2 \lambda} \right) \e^{V(\lambda)} \D\nu(y)\\
\to{}& \int_{-1}^1 \frac{8}{\pi} \cos \left(\frac{\pi y}{2} \right) \e^{V(\lambda)} \D\nu(y)
= U(\lambda) \e^{V(\lambda)}, \quad \text{for real } \lambda \searrow 0.
\end{align*}
Setting~$\C_1:= \{\lambda \in \C: \abs \lambda \abs \le 1\}$, continuity implies $$\sup_{\lambda \in \C_1, y \in [-1,1]} \left\abs \cos \left( y \sqrt{\frac{\pi^2}{4}-2 \lambda}\right)\right\abs \lne \infty.$$ Again using the uniformity in Lemma~\ref{l:asympShat} and in the last assertion of Lemma~\ref{l:uvwexpansion} in the first step, we obtain
\begin{align*}
\int_{-1}^1 \widehat{S_y}(\lambda) \D \nu(y)
=& O\left(\int_{-1}^1 \frac{8}{\pi} \left\abs \cos \left( y \sqrt{\frac{\pi^2}{4}-2 \lambda} \right) \right\abs \e^{V(\abs \lambda\abs)} \D \nu(y)\right)\\
=& O\left(U(\abs \lambda\abs) \e^{V(\abs\lambda\abs)}\right), \quad \lambda \to 0,\ \lambda \in H.
\end{align*}
Recalling Lemma~\ref{l:Vcond}, Lemma~\ref{l:tauberianthm} is applicable and implies $$\widehat{S_\nu}(\lambda) \sim \frac{U(h(T)) \e^{Th(T)+V(h(T))}}{h(T) \sqrt{2\pi V''(h(T))}}, \quad T \to \infty.$$ The claim follows by inserting the asymptotics developed in Lemma~\ref{l:asymph}.
\end{proof}

\subsection{Asymptotics for deterministic starting points} \label{sec:startingpointsgeneral}
In this section, we finally prove Proposition~\ref{prop:asymp}, i.e., we extend the precise asymptotics to the case of deterministic starting points. To this end, we discuss how~$\frac{\p_y(\Gamma_T \in (0,s])}{\p_\nu(\Gamma_T  \in (0,s])}$ behaves as~$T \to \infty$. Apart from the analysis in Section~\ref{sec:laplaceanalysis}, our argument relies on a rather elementary observation, which we have not seen used before.
Moreover, the proof will explicitly highlight the role of the density of~$\nu$ as noted in the introduction of the previous section.\\
We start with an auxiliary result, which, again, is a consequence of our analysis in Section~\ref{sec:laplaceanalysis} and Ingham's Tauberian theorem (Lemma~\ref{l:tauberianthm}):
\begin{lem}
\label{l:asympintegralS1}
For any~$T_0>0$, we have $$\lim_{T \to \infty} \frac{\int_{T-T_0}^T S_1(t) \D t}{\int_0^T S_1(t) \D t}=0.$$
\end{lem}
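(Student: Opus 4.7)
The plan is to apply Ingham's Tauberian theorem (Lemma~\ref{l:tauberianthm}) not to $S_1$ itself -- whose monotonicity is unclear -- but to its manifestly non-decreasing primitive $F(T):=\int_0^T S_1(t)\,\D t$ with $F(0)=0$. For $y=1$ the function $u$ of Section~\ref{sec:laplaceanalysis} equals $1$ identically, so Lemma~\ref{l:asympShat} reads $\widehat{S_1}(\lambda)\sim\tfrac{16}{\pi^2}\lambda\,\e^{V(\lambda)}$ for real $\lambda\searrow 0$ together with the matching $O$-bound on $H$. A Fubini-type computation (justified by non-negativity of $S_1$ and convergence of $\widehat{S_1}$ on $H$) shows $\widehat{F}(\lambda)=\widehat{S_1}(\lambda)/\lambda$, and the extra factor $1/\lambda$ precisely cancels the $\lambda$-prefactor. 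Hence $\widehat{F}(\lambda)\sim\tfrac{16}{\pi^2}\e^{V(\lambda)}$ and $\widehat{F}(\lambda)=O\bigl(\e^{V(|\lambda|)}\bigr)$, and with the \emph{constant} amplitude $U\equiv 16/\pi^2$ (trivially satisfying all hypotheses of Lemma~\ref{l:tauberianthm} on $U$) and the same $V$ from Lemma~\ref{l:Vcond}, Ingham's theorem applies to $F$.

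Substituting the expansions of Lemma~\ref{l:asymph} produces $F(T)\sim c\,T^{1/3}\exp\bigl(aT^{2/3}-bT^{1/3}+d\bigr)$ for explicit positive constants $a,b,c,d$. My next step is a simple shift comparison: the polynomial prefactor satisfies $T^{1/3}/(T-T_0)^{1/3}\to 1$, and the difference of exponents at $T$ and $T-T_0$ equals $a\bigl(T^{2/3}-(T-T_0)^{2/3}\bigr)-b\bigl(T^{1/3}-(T-T_0)^{1/3}\bigr)+O(T^{-1/3})$, which tends to $0$ since $T^{2/3}-(T-T_0)^{2/3}=O(T^{-1/3})$ and $T^{1/3}-(T-T_0)^{1/3}=O(T^{-2/3})$. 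Consequently $F(T-T_0)/F(T)\to 1$.

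Writing $\int_{T-T_0}^T S_1(t)\,\D t=F(T)-F(T-T_0)$, the ratio in the statement equals $1-F(T-T_0)/F(T)\to 0$, which is the claim.

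The principal -- though mild -- obstacle is the opening structural step: recognising that passing to the primitive $F$ resolves, simultaneously, both the missing monotonicity of $S_1$ and the inconvenient $\lambda$-factor in the leading order of $\widehat{S_1}$, which would otherwise force a non-constant $U$ in Ingham's theorem and complicate the verification of the regularity assumptions on it. After this observation, the remainder is a direct substitution into the expansions already established in Lemmas~\ref{l:asympShat} and~\ref{l:asymph}.
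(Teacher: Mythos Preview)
Your argument is correct and essentially identical to the paper's: both apply Ingham's theorem to the non-decreasing primitive $F(T)=\int_0^T S_1(t)\,\D t$ (the paper via integration by parts, you via Fubini, yielding the same $\widehat{F}=\widehat{S_1}/\lambda$), use the constant $U\equiv 16/\pi^2$ together with Lemma~\ref{l:Vcond}, insert the expansions of Lemma~\ref{l:asymph}, and conclude from $T^q-(T-T_0)^q\to 0$ for $q\in(0,1)$ that $F(T-T_0)/F(T)\to 1$. One cosmetic slip: the polynomial prefactor coming out of Lemma~\ref{l:asymph} is $c\,T^{-1/3}$, not $c\,T^{1/3}$ (since $h(T)\sqrt{2\pi V''(h(T))}\sim \text{const}\cdot T^{1/3}$ sits in the denominator), but this does not affect your ratio argument.
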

\begin{proof}
Clearly, the constant function $U: D \to \C$,~$\lambda \mapsto \frac{16}{\pi^2}$ satisfies the assumptions of Lemma~\ref{l:tauberianthm}. Integrating by parts, noting~$u_1(\lambda)=1$ for all $\lambda$ and applying Lemma~\ref{l:asympShat}, we get $$\lambda \int_0^\infty \left(\int_0^T S_1(t) \D t\right) \e^{-\lambda T} \D \lambda = \frac{1}{\lambda}\widehat{S_1}(\lambda) \sim U(\lambda) \e^{V(\lambda)}, \quad \text{for real } \lambda \searrow 0,$$ and similarly $$\lambda \int_0^\infty \left(\int_0^T S_1(t) \D t\right) \e^{-\lambda T} \D \lambda
= O\left( U(\abs\lambda\abs) \e^{V(\abs\lambda\abs)}\right), \quad \lambda \to 0,\ \lambda \in H.$$ Since~$S_1$ is non-negative, its primitive function is non-decreasing. Recalling Lemma~\ref{l:Vcond}, Lemma~\ref{l:tauberianthm} is applicable and implies $$\int_0^T S_1(t) \D t \sim \frac{ U(h(T)) \e^{Th(T)+V(h(T))}}{h(T) \sqrt{2\pi V''(h(T))}}, \quad T \to \infty.$$ Inserting the asymptotics developed in Lemma~\ref{l:asymph}, we deduce $$\int_0^T S_1(t) \D t \sim \frac{2^{\frac{25}{6}}}{\sqrt{3} \pi^{\frac{19}{6}} s^{\frac{1}{6}} T^{\frac{1}{3}}} \exp\left( \frac{3}{2^{\frac{7}{3}}} \pi^{\frac{4}{3}} s^{\frac{1}{3}} T^{\frac{2}{3}} - \frac{3}{2^{\frac{5}{3}}} \pi^{\frac{2}{3}} s^{\frac{2}{3}} T^{\frac{1}{3}} +\frac{\pi^2+12}{24}s \right), \quad T \to \infty.$$ Recalling $\lim_{T \to \infty} (T^q - (T-T_0)^q) = 0$ for all~$q \in (0,1)$, we deduce $$\lim_{T \to \infty} \frac{\int_0^{T-T_0} S_1(t) \D t}{\int_0^T S_1(t) \D t}=1, \quad T_0 \gne 0,$$ proving the claim.
\end{proof}

We remark that the argument in the above proof to obtain the asymptotics of~$\int_0^T S_y(t) \D t$ works for any starting point~$y \in \R$.
\\
Let us define
\begin{align*}
\sigma:=\inf\left\{T>0 : \int_0^T \1_{\{\abs X_t\abs \ge 1\}} \D t >s\right\},
\end{align*}
where~$(X_t)_{t \ge 0}$ denotes a Brownian motion with start in~$1$, independent of~$B$. In the sequel, we shall split a path with $\tau\leq T$ into a piece of length $\tau$ and an ingredient involving $\sigma$.

\begin{lem}
\label{l:expcomparison}
Given~$\lambda_1 \gne \lambda_0:=\frac{\pi^2}{8} \gne 0$, let~$e_0 \sim \op{Exp}(\lambda_0)$ and~$e_1 \sim \op{Exp}(\lambda_1)$ be independent of~$\sigma$. Then it holds
\begin{align*}
\lim_{T \to \infty} \frac{\mathbb{P}(e_1 + \sigma >T, e_1 \le T)}{\mathbb{P}(e_0 + \sigma >T, e_0 \le T)}
= \lim_{T \to \infty} \frac{\int_0^T \lambda_1 \e^{-\lambda_1 t}\mathbb{P}(\sigma >T-t) \D t}{\int_0^T \lambda_0 \e^{-\lambda_0 t}\mathbb{P}(\sigma >T-t)\D t}
=0.
\end{align*}
\end{lem}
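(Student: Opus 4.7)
My plan is to handle the two claimed equalities separately. The first equality is essentially an unpacking of the definition: since $e_i$ has density $t \mapsto \lambda_i \e^{-\lambda_i t}\1_{[0,\infty)}(t)$ and is independent of $\sigma$, conditioning on the value of $e_i$ immediately gives
\[
\p(e_i + \sigma > T,\ e_i \le T) = \int_0^T \lambda_i \e^{-\lambda_i t}\p(\sigma > T-t)\, \D t, \qquad i \in \{0,1\}.
\]

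For the limit statement, the first move I would make is to exploit the identification $\{\sigma > u\} = \{\Gamma_u \le s\}$ under $\p_1$, which together with $\p_1(\Gamma_u = 0) = 0$ yields $\p(\sigma > u) = \e^{-\lambda_0 u} S_1(u)$. Substituting this into both integrals and changing variables via $u = T - t$, one sees that a common factor $\e^{-\lambda_0 T}$ cancels between numerator and denominator, so the ratio reduces to
\[
\frac{\lambda_1}{\lambda_0}\cdot\frac{\int_0^T \e^{-\mu(T-u)} S_1(u)\, \D u}{\int_0^T S_1(u)\, \D u}, \qquad \mu := \lambda_1 - \lambda_0 \gne 0.
\]

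To show this tends to $0$, I would fix a truncation level $T_0 \gne 0$ and split the numerator integral at $u = T - T_0$: on $[0,T-T_0]$ I bound $\e^{-\mu(T-u)} \le \e^{-\mu T_0}$, and on $[T-T_0,T]$ I use the trivial bound $\e^{-\mu(T-u)}\le 1$. Dividing through by the denominator yields
\[
\frac{\int_0^T \e^{-\mu(T-u)} S_1(u)\, \D u}{\int_0^T S_1(u)\, \D u} \le \e^{-\mu T_0} + \frac{\int_{T-T_0}^T S_1(u)\, \D u}{\int_0^T S_1(u)\, \D u}.
\]
By Lemma~\ref{l:asympintegralS1} the second summand vanishes as $T \to \infty$, so the limsup of the left-hand side is at most $\e^{-\mu T_0}$, which can be made arbitrarily small by sending $T_0 \to \infty$.

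I do not anticipate any real obstacle. The only mild subtlety is that pointwise asymptotics for $S_1$ are not available, but the integrated control supplied by Lemma~\ref{l:asympintegralS1} is exactly tailored to the situation: the exponential weight $\e^{-\mu(T-u)}$ concentrates the numerator near $u = T$, while Lemma~\ref{l:asympintegralS1} tells us precisely that a bounded window near $T$ contributes only a negligible fraction of $\int_0^T S_1(u)\,\D u$, so the large contribution to the denominator comes from smaller values of $u$ that the weight suppresses.
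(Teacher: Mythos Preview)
Your argument is correct and is essentially the paper's own proof: both rewrite $\p(\sigma>u)=\e^{-\lambda_0 u}S_1(u)$, cancel the common factor $\e^{-\lambda_0 T}$, split the remaining weighted integral at a fixed level (your $T_0$, the paper's $T_\varepsilon$), and invoke Lemma~\ref{l:asympintegralS1}. The only difference is cosmetic---you substitute $u=T-t$ before splitting, while the paper splits in the $t$-variable---so the two proofs are the same.
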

\begin{proof}
Let~$T \ge 0$. We start by observing
\begin{align*}
\mathbb{P}(e_0 + \sigma >T, e_0 \le T){}&=\int_0^T\lambda_0 \e^{-\lambda_0 t}\mathbb{P}(\sigma>T-t) \D t\\
{}&= \lambda_0 \e^{-\lambda_0 T}\int_0^T S_1(T-t) \D t
= \lambda_0 \e^{-\lambda_0 T}\int_0^T S_1(t) \D t.
\end{align*}
In the same way, we obtain 
\begin{align*}
\mathbb{P}(e_1 + \sigma >T, e_1 \le T){}&=\int_0^T \lambda_1 \e^{-\lambda_1 t}\mathbb{P}(\sigma>T-t) \D t
=\lambda_1 \e^{-\lambda_0 T}\int_0^T \e^{-(\lambda_1-\lambda_0)t}S_1(T-t) \D t.
\end{align*}
Now let~$\eps \gne 0$. Taking~$T_{\varepsilon}>0$ such that~$\e^{-(\lambda_1-\lambda_0)t}\leq \varepsilon$ for all~$t\geq T_{\varepsilon}$, we estimate
\begin{align*}
\int_0^T\e^{-(\lambda_1-\lambda_0)t}S_1(T-t) \D t
={}& \int_0^{T_{\varepsilon}} \e^{-(\lambda_1-\lambda_0)t}S_1(T-t) \D t +\int_{T_{\varepsilon}}^T \e^{-(\lambda_1-\lambda_0)t}S_1(T-t) \D t \\
\leq{}& \int_0^{T_{\varepsilon}} S_1(T-t) \D t+\varepsilon \int_{T_{\varepsilon}}^T S_1(T-t) \D t
= \int_{T-T_{\varepsilon}}^T S_1(t) \D t+  \varepsilon \int_{0}^{T-T_{\varepsilon}} S_1(t) \D t.
\end{align*}
We conclude
\begin{align*}
\frac{\mathbb{P}(e_1 + \sigma >T, e_1 \le T)}{\mathbb{P}(e_0 + \sigma >T, e_0 \le T)} {}&\leq  \frac{\lambda_1 \left(\int_{T-T_{\varepsilon}}^T S_1(t) \D t+  \varepsilon \int_{0}^{T-T_{\varepsilon}} S_1(t) \D t \right)}{\lambda_0 \int_0^T S_1(t) \D t}
\leq \frac{\lambda_1 \int_{T-T_{\varepsilon}}^T S_1(t) \D t}{\lambda_0 \int_{0}^T S_1(t) \D t}+\frac{\lambda_1}{\lambda_0} \varepsilon, \quad T \ge T_\eps.
\end{align*}
First letting~$T \to \infty$, invoking Lemma~\ref{l:asympintegralS1}, and then~$\eps \searrow 0$, we get the claim.
\end{proof}

We are now ready to control the behavior of~$\frac{\mathbb{P}_y(\Gamma_T\in (0,s])}{\mathbb{P}_\nu(\Gamma_T\in (0,s])}$ as announced at the beginning of this section:
\begin{lem}
\label{l:quotientPyPnu}
Let~$y \in (-1,1)$. It holds that
\begin{align*}
\lim_{T \to \infty} \frac{\mathbb{P}_y(\Gamma_T\in (0,s])}{\mathbb{P}_\nu(\Gamma_T\in (0,s])} = \frac{4}{\pi} \cos\left(\frac{\pi y}{2}\right).
\end{align*}
\end{lem}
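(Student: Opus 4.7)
The plan is to decompose the event $\{\Gamma_T \in (0,s]\}$ at the first exit time $\tau$ of $B$ from $(-1,1)$. Since $\Gamma_T > 0$ forces $\tau \le T$ and $B_\tau \in \{-1,1\}$, the strong Markov property together with the symmetry of Brownian motion and of the interval gives
$$\mathbb{P}_y(\Gamma_T \in (0,s]) = \int_0^T f_y(t)\, \mathbb{P}(\sigma > T-t)\, \D t,$$
where $f_y$ is the Lebesgue density of $\tau$ under $\mathbb{P}_y$. The same decomposition applied under $\mathbb{P}_\nu$ -- under which, by the defining property~\eqref{eqn:qsd}, $\tau$ is $\op{Exp}(\lambda_0)$-distributed with $\lambda_0 := \pi^2/8$ -- yields
$$\mathbb{P}_\nu(\Gamma_T \in (0,s]) = \int_0^T \lambda_0\, \e^{-\lambda_0 t}\, \mathbb{P}(\sigma > T-t)\, \D t.$$

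Next, I would differentiate the spectral expansion in~\eqref{eqn:asympGamma0} with respect to $T$ to obtain
$$f_y(t) = \frac{\pi}{2}\cos\left(\frac{\pi y}{2}\right) \e^{-\lambda_0 t} + g_y(t),$$
where $g_y$ collects the contributions of all eigenmodes with index $k \ge 1$. Plugging the leading term into the convolution above and using the identity $\frac{\pi/2}{\lambda_0} = \frac{4}{\pi}$ produces exactly $\frac{4}{\pi}\cos(\pi y/2)\cdot \mathbb{P}_\nu(\Gamma_T \in (0,s])$, so the claimed limit reduces to
$$\int_0^T g_y(t)\, \mathbb{P}(\sigma > T-t)\, \D t = o\left(\mathbb{P}_\nu(\Gamma_T \in (0,s])\right), \qquad T \to \infty.$$

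To achieve this, I would prove the uniform bound $|g_y(t)| \le C_y\, \e^{-\lambda_1 t}$ for all $t \ge 0$, with $\lambda_1 := 9\pi^2/8$. The termwise estimate on the tail of the series defining $g_y$ gives this directly for $t$ bounded away from zero, while for small $t$ it suffices to combine the boundedness of $f_y$ on $[0,\infty)$ with a trivial adjustment of constants. Since $\lambda_1 > \lambda_0$, Lemma~\ref{l:expcomparison} then yields
$$\int_0^T \e^{-\lambda_1 t}\, \mathbb{P}(\sigma > T-t)\, \D t = o\left(\int_0^T \e^{-\lambda_0 t}\, \mathbb{P}(\sigma > T-t)\, \D t\right), \qquad T \to \infty,$$
which is precisely what is needed after dividing by $\mathbb{P}_\nu(\Gamma_T \in (0,s])$.

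The main technical point is the uniform control of $g_y$ across both the $t \to 0^+$ and $t \to \infty$ regimes, which is accomplished by the split just described; once this is in place, the rest of the argument follows essentially automatically from Lemma~\ref{l:expcomparison}.
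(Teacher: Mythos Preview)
Your proposal is correct and follows essentially the same route as the paper's proof: decompose at the first exit time $\tau$, identify the $\nu$-case via the quasi-stationary property~\eqref{eqn:qsd}, split $f_y$ into its leading eigenmode $\varphi_0(y)\lambda_0\e^{-\lambda_0 t}=\frac{\pi}{2}\cos(\frac{\pi y}{2})\e^{-\lambda_0 t}$ plus a remainder $g_y$ bounded by $C\e^{-\lambda_1 t}$, and then invoke Lemma~\ref{l:expcomparison}. The paper handles the boundedness of $f_y$ near $t=0$ by bounding it via the (inverse-chi-squared) densities of $\tau_1$ and $\tau_{-1}$, which is the only detail you leave implicit.
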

\begin{proof}
Defining $$\lambda_n:= \frac{(2n+1)^2\pi^2}{8} \quad \text{and} \quad \varphi_n(y) := \frac{4(-1)^n}{\pi(2n+1)} \cos\left(\frac{(2n+1) \pi y}{2}\right), \quad y \in (-1,1),\ n \in \N_0,$$
a Lebesgue density~$f_y: (0,\infty) \to (0,\infty)$ of~$\tau$ under~$\p_y$ is given by (see, e.g., Example~5a in~\cite{DS53}, cf.~equation~\eqref{eqn:asympGamma0})
\begin{equation}\label{e:exit2}
f_y(t)=\sum_{n=0}^{\infty} \lambda_n \e^{-\lambda_n t}\varphi_n(y), \quad t \gne 0.
\end{equation}
The dominated convergence theorem implies that~$f_y$ is continuous. Now let~$\tau_1$ and~$\tau_{-1}$ be the first times~$B$ hits~$1$ and~$-1$, respectively. Noting $\tau=\tau_{-1}\wedge\tau_1$, we observe $$\p_y(\tau \in A) \le \p_y(\tau_1 \in A) + \p_y(\tau_{-1} \in A), \quad A \in \BB((0,\infty)).$$ Recalling that~$\tau_1$ and~$\tau_{-1}$ follow inverse-chi-squared distributions (see, e.g., Remark~2.8.3 in~\cite{KS91}) and thus have bounded Lebesgue densities under~$\p_y$, the continuous density~$f_y$ of~$\tau$ must be bounded as well. Therefore, 
\begin{equation*}
g_y: (0,\infty) \to \R, \quad  t\mapsto f_y(t)-\varphi_0(y) \lambda_0 \e^{-\lambda_0 t} 
\end{equation*}
is a bounded continuous function. Furthermore, equation~\eqref{e:exit2} implies
\begin{align*}
\abs g_y(t) \abs =\e^{-\lambda_1t} \left|\sum_{k=1}^\infty \varphi_k(y) \lambda_k \e^{-(\lambda_k-\lambda_1) t} \right|
\leq \e^{-\lambda_1t}\sum_{k=1}^\infty |\varphi_k(y)| \lambda_k \e^{-(\lambda_k-\lambda_1)}, \quad t \ge 1.
\end{align*}
Noting that the last series converges and recalling the boundedness of~$g_y$, there exists a constant~$K>0$ with~$\abs g_y(t)\abs \le  K \e^{-\lambda_1 t}$ for all~$t\gne 0$. Consequently, Lemma~\ref{l:expcomparison} yields
\begin{align*}
\frac{\big\abs \int_0^T g_y(t)\mathbb{P}(\sigma >T-t) \D t \big\abs }{\int_0^T \lambda_0 \e^{-\lambda_0 t}\mathbb{P}(\sigma >T-t)\D t}
\le 
\frac{K\int_0^T \e^{-\lambda_1 t}\mathbb{P}(\sigma >T-t) \D t}{\lambda_0 \int_0^T \e^{-\lambda_0 t}\mathbb{P}(\sigma >T-t)\D t} \to 0, \quad T \to \infty.
\end{align*}
We deduce
\begin{align*}
\mathbb{P}_y(\Gamma_T\in (0,s])
={}&\mathbb{P}_y( \tau+\sigma >T, \tau \le T)\\
={}& \int_0^T f_y(t)\mathbb{P}(\sigma >T-t) \D t \\
={}& \varphi_0(y) \int_0^T\lambda_0 \e^{-\lambda_0 t} \mathbb{P}(\sigma >T-t)\D t  + \int_0^T g_y(t)\mathbb{P}(\sigma >T-t) \D t\\
\sim {}& \varphi_0(y) \int_0^T\lambda_0 \e^{-\lambda_0 t} \mathbb{P}(\sigma >T-t)\D t, \quad T \to \infty.
\end{align*}
On the other hand, the characterization of the quasi-stationary distribution~$\nu$ given in~\eqref{eqn:qsd} implies
\begin{align*}
    \mathbb{P}_{\nu}(\Gamma_T \in (0,s])= \mathbb{P}_{\nu}( \tau+\sigma >T, \tau \le T) = 
    \int_0^T\lambda_0e^{-\lambda_0 t}\mathbb{P}(\sigma>T-t) \D t, \quad T \ge 0,
\end{align*}
proving the claim.
\end{proof}

For the sake of completeness, we record the following result, which is clear from a heuristic point of view. The proof is an easy coupling argument.
\begin{lem}\label{l:comparison}
For all~$x,y \in \R$ with~$\abs y\abs \geq \abs x\abs$, we have 
\begin{equation*}
\mathbb{P}_{y}\left(\Gamma_T\leq s\right)\leq \mathbb{P}_{x}\left(\Gamma_T\leq s\right).
\end{equation*}
\end{lem}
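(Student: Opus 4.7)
The strategy is a pathwise coupling: I will construct, for suitable $x$ and $y$, a simultaneous realization $(B^x, B^y)$ of Brownian motions starting in $x$ and in $y$ such that $\abs B^y_t \abs \ge \abs B^x_t \abs$ for every $t \ge 0$, almost surely. Once this is in hand, the inclusion $\{\abs B^x_t \abs \ge 1\} \subseteq \{\abs B^y_t \abs \ge 1\}$ holds pointwise in $t$, so $\Gamma_T(B^y) \ge \Gamma_T(B^x)$ pathwise, hence $\{\Gamma_T(B^y) \le s\} \subseteq \{\Gamma_T(B^x) \le s\}$, and taking probabilities yields the assertion.

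A preliminary reduction will simplify matters. Since $\Gamma_T$ depends on the trajectory only through $\abs B \abs$ and Brownian motion is invariant under reflection about the origin, the probability $\p_z(\Gamma_T \le s)$ depends on $z$ only through $\abs z \abs$. Therefore I may (and will) assume without loss of generality that $0 \le x \le y$.

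For the coupling, I will use reflection about the midpoint $m := (x+y)/2 \ge 0$. Take a standard Brownian motion $B^x$ with $B^x_0 = x$, let $\tau := \inf\{t \ge 0 : B^x_t = m\}$, and define
$$B^y_t := 2m - B^x_t \text{ for } t \le \tau, \qquad B^y_t := B^x_t \text{ for } t > \tau.$$
By the strong Markov property together with the invariance of Brownian motion under reflection in a level, $B^y$ is a standard Brownian motion with $B^y_0 = 2m - x = y$. For $t \le \tau$ one has $B^x_t \le m$, hence $B^y_t = 2m - B^x_t \ge m \ge 0$; a short case distinction on the sign of $B^x_t$, using $m \ge 0$, then gives $\abs B^y_t \abs = B^y_t \ge \abs B^x_t \abs$ (if $B^x_t \ge 0$, then $B^y_t \ge m \ge B^x_t = \abs B^x_t \abs$; if $B^x_t < 0$, then $B^y_t = 2m - B^x_t \ge -B^x_t = \abs B^x_t \abs$). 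For $t > \tau$ the paths coincide and the inequality is trivial.

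I do not foresee a real obstacle: once the symmetry reduction is made, the whole argument is a few lines of pathwise comparison built on the classical reflection coupling. The only point that needs mild care is the case distinction on the sign of $B^x_t$ used to pass from $B^y_t \ge m \ge 0$ to $\abs B^y_t \abs \ge \abs B^x_t \abs$.
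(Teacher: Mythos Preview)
Your proof is correct and follows the same overall strategy as the paper: a pathwise coupling of two Brownian motions started at $x$ and $y$ so that the one started farther from the origin always has the larger modulus. The specific coupling differs slightly. You use the classical mirror coupling about the midpoint $m=(x+y)/2$ (reflect until the two paths coalesce at $m$, then let them run together), which directly yields $|B^y_t|\ge|B^x_t|$ for all $t$. The paper instead drives both processes with the \emph{same} Brownian increment, so that $X^y_t-X^x_t=y-x$ stays constant, until the pair becomes symmetric about the origin (i.e.\ $X^x$ reaches $(x-y)/2$ and $X^y$ reaches $(y-x)/2$); thereafter it continues them as reflections of one another about $0$, so that $|X^y_t|=|X^x_t|$ in the second phase. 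Both constructions deliver the needed pointwise comparison of occupation times; your mirror coupling is the more standard device and makes the inequality $|B^y_t|\ge|B^x_t|$ especially transparent.
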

\begin{proof}
W.l.o.g., assume~$x,y \ge 0$. Let~$(B^1_t)_{t\geq 0}$ and~$(B^2_t)_{t\geq 0}$ be independent standard Brownian motions. Using the notation $\tau_z=\inf\{ t\geq 0 : B^1_t=z\}$ for~$z \in \R$, the processes~$(X_t^x)_{t \geq 0}$ and~$(X_t^y)_{t \geq 0}$ defined by
\begin{equation*}
X_t^x=\begin{cases}
x+B^1_t, & \quad t \leq \tau_{-(y+x)/2}, \\
-(y+x)/2 + B^2_{t-\tau_{-(y+x)/2}}, & \quad t \geq \tau_{-(y+x)/2},
\end{cases}
\end{equation*}
and
\begin{equation*}
X_t^y=\begin{cases}
y+ B^1_t, & \quad t \leq \tau_{-(y+x)/2}, \\
(y+x)/2 - B^2_{t-\tau_{-(y+x)/2}}, & \quad t \geq \tau_{-(y+x)/2},
\end{cases}
\end{equation*}
are Brownian motions starting in~$x$ and~$y$, respectively. Until time~$\tau_{-(y+x)/2}$, the process~$X_t^y$ spends more time outside~$(-1,1)$ than~$X_t^x$. Afterwards, both processes spend the same amount of time outside~$(-1,1)$. Thus it holds
\begin{align*}
\int_0^T \1_{\{\abs X_t^x\abs \ge 1\}} \D t \leq \int_0^T \1_{\{\abs X_t^y\abs \ge 1\}} \D t
\end{align*}
proving the claim.
\end{proof}

It essentially remains to combine Proposition~\ref{prop:asympSnu} and Lemma~\ref{l:quotientPyPnu}:
\begin{proof}[Proof of Proposition~\ref{prop:asymp}]
Let~$y \in (-1,1)$. Lemma~\ref{l:quotientPyPnu} and Proposition~\ref{prop:asympSnu} imply
\begin{align*}
{}&\p_y(\Gamma_T \in (0,s])\\
={}& \frac{\p_y(\Gamma_T \in (0,s])}{\p_{\nu}(\Gamma_T \in (0,s])} \cdot \e^{-\frac{\pi^2}{8} T} S_\nu(T)\\
\sim{}& \frac{4}{\pi} \cos\left(\frac{\pi y}{2}\right) \cdot \frac{ 2^{\frac{7}{6}}}{\sqrt{3} \pi^{\frac{7}{6}} s^{\frac{1}{6}} T^{\frac{1}{3}}} \exp\left( -\frac{\pi^2}{8} T + \frac{3}{2^{\frac{7}{3}}} \pi^{\frac{4}{3}} s^{\frac{1}{3}} T^{\frac{2}{3}} - \frac{3}{2^{\frac{5}{3}}} \pi^{\frac{2}{3}} s^{\frac{2}{3}} T^{\frac{1}{3}} +\frac{\pi^2+12}{24}s \right), \quad T \to \infty.
\end{align*}
Comparing this with equation~\eqref{eqn:asympGamma0}, we see that $$\p_y(\Gamma_T \le s)= \p_y(\Gamma_T = 0) + \p_y(\Gamma_T \in (0,s])$$ must have the same asymptotic behavior as~$T \to \infty$ proving the first claim.\\
Now let~$x \in \R \setminus (-1,1)$. Using Lemma~\ref{l:comparison} and the case already settled, we get
\begin{align*}
   \limsup_{T\rightarrow \infty}  \frac{\p_x(\Gamma_T \le s)}{\p_0(\Gamma_T \le s)} \leq \limsup_{T\rightarrow \infty} \frac{\p_{y}(\Gamma_T \le s)}{\p_0(\Gamma_T \le s)} = \cos\left(\frac{\pi y}{2} \right), \quad y \in (-1,1),
\end{align*}
which, taking~$y \nearrow 1$, implies 
\begin{align*}
   \limsup_{T\rightarrow \infty}  \frac{\p_x(\Gamma_T \le s)}{\p_0(\Gamma_T \le s)} = 0,
\end{align*}
proving the second claim.
\end{proof}

\section{Proofs of Theorem~\ref{thm:main} and Corollary \ref{cor:outside}} \label{sec:proofofmaintheorem}
Let us now prove our main result. To this end, let~$y \in (-1,1)$. It is a classical result (see, e.g., Example~1 in~\cite{Pin85}) that $$\p_y(B \in \cdot\,\,\abs\, \Gamma_T = 0) = \p_y\left(B \in \cdot\,\,\big\abs \, \abs B_t\abs \lne 1 \text{ for all } t \in [0,T]\right)$$ converges weakly to a probability measure~$\Q_y$ on~$\mathcal{C}([0,\infty),\mathbb{R})$ as~$T \to \infty$ and that the limiting process~$(Y_t)_{t \ge 0} \sim \Q_y$ satisfies the SDE~\eqref{eqn:sde}. Regarding the weak convergence of $\p_y(B \in \cdot\,\,\abs\, \Gamma_T \le s)$ as~$T \to \infty$, we follow the classical approach: We prove convergence of finite-dimensional distributions and tightness.

\begin{lem}
As~$T \to \infty$, the probability measures $\p_y(B \in \cdot \,\, \abs\, \Gamma_T \le s)$ converge to~$\Q_y$ in finite-dimensional distributions.
\end{lem}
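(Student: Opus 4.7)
The plan is to apply the Markov property at the largest time $t_k$ and then leverage the sharp asymptotics of Proposition~\ref{prop:asymp} to analyze the resulting ratio. Fix a bounded continuous $F:\R^k\to\R$. Since $\Gamma$ is non-decreasing, $\{\Gamma_T\le s\}\subseteq\{\Gamma_{t_k}\le s\}$, and the Markov property at $t_k$ yields
\[
\E_y[F(B_{t_1},\ldots,B_{t_k})\mid\Gamma_T\le s] = \E_y\left[F(B_{t_1},\ldots,B_{t_k})\,\Phi_T(B_{t_k},\Gamma_{t_k})\right],
\]
where
\[
\Phi_T(x,g):=\frac{\1_{\{g\le s\}}\,\p_x(\Gamma_{T-t_k}\le s-g)}{\p_y(\Gamma_T\le s)}.
\]

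The next step is to determine the pointwise $T\to\infty$ limit of $\Phi_T(B_{t_k},\Gamma_{t_k})$, which splits into three regimes. On $\{\Gamma_{t_k}=0,\,|B_{t_k}|<1\}$, applying Proposition~\ref{prop:asymp} to both numerator and denominator, the polynomial prefactors tend to $1$ and the $s$-dependent subexponential corrections such as $s^{1/3}((T-t_k)^{2/3}-T^{2/3})$ vanish; the leading exponents $-\tfrac{\pi^2}{8}T$ and $-\tfrac{\pi^2}{8}(T-t_k)$ leave the surviving factor $\e^{\pi^2 t_k/8}$, giving
\[
\Phi_T(B_{t_k},0)\longrightarrow \e^{\pi^2 t_k/8}\,\frac{\cos(\pi B_{t_k}/2)}{\cos(\pi y/2)}.
\]
On $\{0<\Gamma_{t_k}\le s,\,|B_{t_k}|<1\}$, the reduced budget $s-\Gamma_{t_k}<s$ turns the dominant subexponential factor into $\exp(C_1(s-\Gamma_{t_k})^{1/3}(T-t_k)^{2/3})$, which is smaller than $\exp(C_1 s^{1/3}T^{2/3})$ by a factor of order $\exp(C_1 T^{2/3}[(s-\Gamma_{t_k})^{1/3}-s^{1/3}])\to 0$, so $\Phi_T\to 0$. (The degenerate case $\Gamma_{t_k}=s$ is handled via~\eqref{eqn:asympGamma0}, which produces an even smaller exponential.) Finally, on $\{|B_{t_k}|\ge 1\}$, a factorization through the intermediate ratio $\p_y(\Gamma_{T-t_k}\le s)/\p_y(\Gamma_T\le s)\to\e^{\pi^2 t_k/8}$ combined with~\eqref{eqn:outsideasymptotics} gives $\Phi_T\to 0$ as well.

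To exchange limit and expectation, Lemma~\ref{l:comparison} provides the uniform bound
\[
\Phi_T(B_{t_k},\Gamma_{t_k})\le\frac{\p_0(\Gamma_{T-t_k}\le s)}{\p_y(\Gamma_T\le s)},
\]
whose right-hand side converges to the finite constant $\e^{\pi^2 t_k/8}/\cos(\pi y/2)$ by Proposition~\ref{prop:asymp}. Dominated convergence then yields
\[
\lim_{T\to\infty}\E_y[F\mid\Gamma_T\le s] = \frac{\e^{\pi^2 t_k/8}}{\cos(\pi y/2)}\,\E_y\left[F(B_{t_1},\ldots,B_{t_k})\1_{\{\tau>t_k\}}\cos(\pi B_{t_k}/2)\right],
\]
which, using $\{\tau>t_k\}=\{\Gamma_{t_k}=0\}$ $\p_y$-a.s., is precisely the $k$-dimensional marginal of $\Q_y$ realized as the Doob $h$-transform of Brownian motion killed on leaving $(-1,1)$, with ground-state eigenfunction $h(x)=\cos(\pi x/2)$ and eigenvalue $\pi^2/8$ (cf.~\cite{Pin85}).

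The main obstacle is the second step, specifically controlling the $\{\Gamma_{t_k}>0\}$ regime: establishing rigorously that the reduced budget $s-\Gamma_{t_k}$ yields an exponentially suppressive factor of order $\exp(C_1 T^{2/3}[(s-\Gamma_{t_k})^{1/3}-s^{1/3}])$. This is the precise mechanism by which the conditional law concentrates on $\{\Gamma_{t_k}=0\}$, despite paths with $\Gamma_{t_k}>0$ carrying the overwhelming majority of the probability of $\{\Gamma_T\le s\}$---the very entropic repulsion phenomenon the paper elucidates.
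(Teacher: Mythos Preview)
Your argument is correct and close in spirit to the paper's, but the handling of the event $\{\Gamma_{t_k}>0\}$ differs in an instructive way. You apply the Markov property at the deterministic time $t_k$ and then, on $\{0<\Gamma_{t_k}\le s,\ |B_{t_k}|<1\}$, invoke Proposition~\ref{prop:asymp} pointwise with the reduced budget $s'=s-\Gamma_{t_k}(\omega)$ to obtain the suppressing factor $\exp\bigl(C_1T^{2/3}[(s-\Gamma_{t_k})^{1/3}-s^{1/3}]\bigr)\to 0$; this is valid because Proposition~\ref{prop:asymp} holds for every fixed positive budget, and your domination via Lemma~\ref{l:comparison} justifies the limit exchange. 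The paper instead observes that $\{\Gamma_{t_d}>0\}=\{\tau\le t_d\}$ a.s.\ and applies the \emph{strong} Markov property at $\tau$: on $\{\tau\le t_d\}$ the process sits at $\pm 1$ at time $\tau$, and since $u\mapsto\p_1(\Gamma_u\le s)$ is non-increasing one gets
\[
\p_y(\tau\le t_d,\ \Gamma_T\le s)\ \le\ \p_1(\Gamma_{T-t_d}\le s),
\]
after which the purely qualitative statement~\eqref{eqn:outsideasymptotics} (together with~\eqref{eqn:quot2}) already forces the contribution to vanish. Thus the ``main obstacle'' you flag---controlling the reduced budget---can be bypassed entirely; the paper never needs the explicit $s^{1/3}T^{2/3}$ dependence for this step. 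What your route buys is a single application of the Markov property at a fixed time and a unified treatment via $\Phi_T$, at the cost of using the full explicit form of the asymptotics; the paper's route is more economical in that only~\eqref{eqn:outsideasymptotics} and the ratio limit~\eqref{eqn:quot2} are needed, with the detailed subexponential corrections playing no role in the finite-dimensional convergence itself.
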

\begin{proof}
Let~$0 \lne t_1 \lne \dots \lne t_d$ and~$C_1,\dots,C_d \in \BB(\R)$. Moreover, let~$\tau$ be the first exit time of~$B$ from~$(-1,1)$. We start by observing
\begin{align*}
{}&\p_y(B_{t_1} \in C_1, \dots, B_{t_d} \in C_d \,\abs\, \Gamma_T\le s)\\
={}&\p_y(B_{t_1} \in C_1,\dots, B_{t_d} \in C_d, \tau \gne t_d\,\abs\, \Gamma_T\le s) + \p_y(B_{t_1} \in C_1, \dots, B_{t_d} \in C_d, \tau \le t_d\,\abs\, \Gamma_T\le s), \quad T \ge 0.
\end{align*}
The Markov property implies
\begin{align}
\p_y(B_{t_1} \in C_1, \dots, B_{t_d} \in C_d, \tau \le t_d\,\abs\, \Gamma_T\le s) \le{}& \p_y(\tau \le t_d\,\abs\, \Gamma_T \le s)\notag\\
\le{}& \frac{\p_1(\Gamma_{T-t_d} \le s)}{\p_y(\Gamma_{T-t_d} \le s)}\cdot \frac{\p_y(\Gamma_{T-t_d} \le s)}{\p_y(\Gamma_T \le s)}, \label{eqn:quot3} \quad T \ge 0.
\end{align}
The first part of Proposition~\ref{prop:asymp} yields
\begin{align}
\lim_{T \to \infty} \frac{\p_{x_2}(\Gamma_{T-t} \le s)}{\p_{x_1}(\Gamma_T \le s)}
= \frac{\cos\left(\frac{\pi x_2}{2}\right)}{\cos\left(\frac{\pi x_1}{2}\right)} \e^{\frac{\pi^2}{8} t}, \quad x_1,x_2 \in (-1,1),\ t \ge 0.
\label{eqn:quot2}
\end{align}
This and the second part of Proposition~\ref{prop:asymp} imply that the term in~\eqref{eqn:quot3} converges to~$0$ as~$T \to \infty$. Now let $$(p_t: (-1,1) \times (-1,1) \to [0,\infty))_{t \gne 0}$$ be the family of (non-probability) transition densities of~$B$ absorbed in~$\{-1,1\}$. Further, let~$x_0:=y$ and~$t_0:=0$. Using the Markov property and the dominated convergence theorem together with~\eqref{eqn:quot2}, we obtain
\begin{align*}
{}&\lim_{T \to \infty} \p_y(B_{t_1} \in C_1, \dots, B_{t_d} \in C_d, \tau \gne t_d\,\abs\, \Gamma_T\le s)\\
={}& \lim_{T \to \infty} \int_{(-1,1)^d} \1_{C_1\times \dots \times C_d}(\boldsymbol{x}) \prod_{i=1}^d p_{t_i-t_{i-1}}(x_{i-1},x_i) \frac{\p_{x_d}(\Gamma_{T-t_d} \le s)}{\p_y(\Gamma_T \le s)} \D \boldsymbol{x}\\
={}& \int_{(-1,1)^d} \1_{C_1\times \dots \times C_d}(\boldsymbol{x}) \prod_{i=1}^d p_{t_i-t_{i-1}}(x_{i-1},x_i) \frac{\cos\left(\frac{\pi x_d}{2}\right)}{\cos\left(\frac{\pi y}{2}\right)}\e^{\frac{\pi^2}{8} t_d} \D \boldsymbol{x}\\
={}& \int_{(-1,1)^d} \1_{C_1\times \dots \times C_d}(\boldsymbol{x}) \prod_{i=1}^d \left(p_{t_i-t_{i-1}}(x_{i-1},x_i) \frac{\cos\left(\frac{\pi x_i}{2}\right)}{\cos\left(\frac{\pi x_{i-1}}{2}\right)}\e^{\frac{\pi^2}{8} (t_i-t_{i-1})}\right) \D \boldsymbol{x},
\end{align*}
where~$x_1,\dots,x_d$ are the components of~$\boldsymbol{x}$. According to Theorem~3.1(ii) of~\cite{CV16} and equation~\eqref{eqn:asympGamma0}, the family~$(\tilde{p}_t)_{t \gne 0}$ of transition densities of the process with law~$\Q_y$ is given by 
\begin{align*}
\tilde{p}_t(x_1,x_2)
={}& \e^{\frac{\pi^2}{8}t} \lim_{T \to \infty} \frac{\p_{x_2}(\Gamma_T = 0)}{\p_{x_1}(\Gamma_T = 0)} p_t(x_1,x_2)\\
={}& \e^{\frac{\pi^2}{8}t}  \frac{\cos\left(\frac{\pi x_2}{2}\right)}{\cos\left(\frac{\pi x_1}{2}\right)} p_t(x_1,x_2), \quad x_1,x_2 \in (-1,1),\ t \gne 0,
\end{align*}
completing the proof.
\end{proof}

To complete the proof of Theorem~\ref{thm:main}, it now suffices to prove tightness of the family of conditional laws with the help of the Kolmogorov-Chentsov theorem.
\begin{lem}
The family~$(\mathbb{P}_y(B \in \cdot\,\,\abs\, \Gamma_T\leq s))_{T\ge 0}$ of probability measures on the Wiener space $\mathcal{C}([0,\infty),\mathbb{R})$ is tight.
\end{lem}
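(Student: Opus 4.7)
The plan is to verify the Kolmogorov--Chentsov tightness criterion on $\CC([0,T_0],\R)$ for each fixed $T_0 > 0$; tightness on the Wiener space $\CC([0,\infty),\R)$ then follows by the standard reduction to finite time horizons. Since the initial distribution is the Dirac mass at $y$, it will suffice to establish a moment bound of the form
$$
\E_y\!\left[\abs B_t - B_u\abs^4 \mid \Gamma_T \le s\right] \le C\,(t-u)^2, \quad 0 \le u \le t \le T_0,
$$
uniformly in $T \ge T_\star$ for some threshold $T_\star > 0$, with $C$ depending only on $y$ and $T_0$.

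The key observation is that for $u,t \in [0,T_0]$, the random variable $\abs B_t - B_u\abs^4$ is $\mathcal{F}_{T_0}$-measurable, which allows us to dominate the random conditioning factor by a deterministic one. Combining the monotonicity of $r \mapsto \Gamma_r$ with the Markov property at time $T_0$ gives, pathwise,
$$
\p_y(\Gamma_T \le s \mid \mathcal{F}_{T_0}) \le \p_y(\Gamma_T - \Gamma_{T_0} \le s \mid \mathcal{F}_{T_0}) = \p_{B_{T_0}}(\Gamma_{T-T_0} \le s),
$$
and Lemma~\ref{l:comparison}, applied with reference point $0$ (noting $\abs B_{T_0}\abs \ge 0$), further bounds the right-hand side by the deterministic quantity $\p_0(\Gamma_{T-T_0} \le s)$. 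Using the elementary identity $\E_y[\abs B_t - B_u\abs^4] = 3(t-u)^2$ and taking iterated expectations, we then obtain
$$
\E_y\!\left[\abs B_t - B_u\abs^4 \1_{\{\Gamma_T \le s\}}\right] \le 3(t-u)^2\,\p_0(\Gamma_{T-T_0} \le s).
$$

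Dividing through by $\p_y(\Gamma_T \le s)$ and invoking Proposition~\ref{prop:asymp},
$$
\lim_{T \to \infty} \frac{\p_0(\Gamma_{T-T_0} \le s)}{\p_y(\Gamma_T \le s)} = \frac{\e^{\frac{\pi^2}{8} T_0}}{\cos\!\left(\frac{\pi y}{2}\right)} < \infty,
$$
so the ratio is bounded by some constant $K_{y,T_0}$ for all $T \ge T_\star$, yielding $\E_y[\abs B_t - B_u\abs^4 \mid \Gamma_T \le s] \le 3 K_{y,T_0}(t-u)^2$ uniformly in such $T$. Kolmogorov--Chentsov then delivers tightness on $\CC([0,T_0],\R)$, and, $T_0$ being arbitrary, tightness on $\CC([0,\infty),\R)$ follows. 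The main subtlety to overcome is the random factor $\p_{B_{T_0}}(\Gamma_{T-T_0} \le s)$ that appears after applying the Markov property; the argument succeeds because Lemma~\ref{l:comparison} cleanly replaces it by the deterministic bound $\p_0(\Gamma_{T-T_0} \le s)$, which by Proposition~\ref{prop:asymp} has exactly the same asymptotic order as $\p_y(\Gamma_T \le s)$.
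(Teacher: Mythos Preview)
Your proof is correct and follows essentially the same strategy as the paper's: apply the Markov property, use the comparison Lemma~\ref{l:comparison} to replace the random factor $\p_{B_{T_0}}(\Gamma_{T-T_0}\le s)$ by the deterministic $\p_0(\Gamma_{T-T_0}\le s)$, invoke Proposition~\ref{prop:asymp} for the ratio, and conclude via Kolmogorov--Chentsov. The only cosmetic differences are that the paper conditions at the variable time $t_2$ rather than the fixed endpoint $T_0$, and it explicitly covers the range of small $T$ (where your argument does not directly apply) by noting that $T\mapsto\p_y(\Gamma_T\le s)$ is continuous and positive---a point you gloss over but which is trivially filled.
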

\begin{proof}
Let~$t_0 \gne 0$ and~$t_1,t_2 \in [0,t_0]$ with~$t_1 \le t_2$. Using the Markov property and Lemma~\ref{l:comparison}, we get
\begin{align*}
\E_y \left[\abs B_{t_2}-B_{t_1}\abs^4\,\big\abs\, \Gamma_T\le s\right]
{}&= \frac{1}{\p_y(\Gamma_T\le s)} \E_y\left[\abs B_{t_2}-B_{t_1}\abs^4 \1_{\{\Gamma_T \le s\}} \right] \\
{}&\le \frac{1}{\p_y(\Gamma_T\le s)} \E_y\left[\abs B_{t_2}-B_{t_1}\abs^4\1_{\{\Gamma_T-\Gamma_{t_2} \le s\}} \right] \\
{}& =  \frac{1}{\p_y(\Gamma_T\le s)} \E_y\left[\abs B_{t_2}-B_{t_1}\abs^4 \p_{B_{t_2}}(\Gamma_{T-t_2}\le s) \right] \\
{}&\le\frac{\p_0(\Gamma_{T-t_2}\le s)}{\p_y(\Gamma_T\le s)}\E_y\left[\abs B_{t_2}-B_{t_1}\abs^4\right]\\
{}&\le\frac{\p_0(\Gamma_{T-t_0}\le s)}{\p_y(\Gamma_T\le s)} 3 \abs t_2-t_1\abs^2, \quad T \ge t_0.
\end{align*}
Since Proposition~\ref{prop:asymp} implies $\limsup_{T\rightarrow \infty}\frac{\p_0(\Gamma_{T-t_0}\le s)}{\p_y(\Gamma_T\le s)} \lne \infty$ and the probabilities are continuous in~$T$, there must be a constant~$C \gne  0$ with $$\E_y \left[\abs B_{t_2}-B_{t_1}\abs^4\,\big\abs\, \Gamma_T\le s\right] \le C \abs t_2-t_1\abs^2, \quad t_1,t_2 \in [0,t_0],\ T \ge 0.$$
Now let~$\eps\gne 0$ and~$\gamma \in (0, \frac{1}{2})$. As a consequence of the theorem of Kolmogorov-Chentsov (see, e.g., Theorem~3.4.16 in~\cite{Str93} for an applicable version), there exists a constant~$K \gne  0$ (independent of~$T$) with
\begin{align*}
\p_y\left(\abs X_{t_2}-X_{t_1}\abs \le K\abs t_2-t_1\abs^{\gamma} \text{ for all }t_1,t_2 \in [0,t_0]\,\big\abs\, \Gamma_T\le s\right) \ge 1-\eps, \quad T \ge 0.
\end{align*}
Moreover, the set 
\begin{align*}
\left\{X \in \CC([0,t_0]):\abs X_{t_2}-X_{t_1} \abs\le K \abs t_2-t_1\abs^{\gamma} \text{ for all }t_1,t_2 \in [0,t_0] \right\}
\end{align*}
is compact in the space~$\mathcal{C}([0,t_0],\mathbb{R})$. Hence we can apply Prohorov's theorem to conclude tightness on~$\mathcal{C}([0,t_0],\mathbb{R})$. Corollary~5 in~\cite{Whi70} yields tightness on~$\mathcal{C}([0,\infty),\mathbb{R})$.
\end{proof}

Finally, we deduce Corollary \ref{cor:outside} from Proposition \ref{prop:asymp}:

\begin{proof}[Proof of Corollary \ref{cor:outside}]
W.l.o.g., let~$\eps \in (0,s)$ and $T \gne \eps$. By symmetry, we may assume~$y \gne 1$. Noting that the mapping $t \mapsto\{\Gamma_{T+t} \le s+t\}$ is (weakly) monotonically increasing on~$[-T,\infty)$ w.r.t.~inclusion, we, on the one hand, obtain
\begin{align*}
\p_y(\tau \ge \eps, \Gamma_T \le s)
= \int_\eps^s \p_1(\Gamma_{T-t} \le s-t) \p_y(\tau \in \D t)
\le \p_1(\Gamma_{T-\eps} \le s-\eps)\p_y(\tau \in [\eps,s]).
\end{align*}
Now let $\tau_0:= \inf\{t \ge 0: B_t =0\}$. On the other hand, we then get
\begin{align*}
\p_y(\Gamma_T \le s)
\ge{}& \p_y(\tau_0 \le \eps,\Gamma_T \le s)\\
\ge{}& \int_0^\eps \p_0(\Gamma_{T-t} \le s-t) \p_y(\tau_0 \in \D t) \ge \p_0(\Gamma_{T-\eps} \le s-\eps) \p_y(\tau_0 \in (0,\eps)).
\end{align*}
Noting $\p_y(\tau_0 \in (0,\eps))\gne 0$, the second part of Proposition \ref{prop:asymp} yields the claim.
\end{proof}

\section*{Acknowledgements}
The second named author would like to thank Mladen Savov and Achim W\"ubker for stimulating discussion on the topic of this work.


\end{document}